\newtheorem{theorem}{Theorem}
\theoremstyle{plain}
\newtheorem{corollary}{Corollary}
\newtheorem{lemma}{Lemma}
\newtheorem{proposition}{Proposition}
\numberwithin{equation}{section}
\theoremstyle{definition}
\newtheorem{definition}{Definition}
\newcommand{\RR}{\mathbb{R}}
\newcommand{\rmM}{\mathrm{M}}
\newcommand{\calK}{\mathcal{K}}
\newcommand{\calP}{\mathcal{P}}
\newcommand{\calS}{\mathcal{S}}
\DeclareMathOperator{\vol}{vol}
\DeclareMathOperator{\conv}{conv}
\DeclareMathOperator{\spn}{span}
\begin{document}
\title[Minkowski valuations]{$GL(n)$ equivariant Minkowski valuations}

\author{Thomas Wannerer}
\address{\begin{flushleft}
Vienna University of Technology \\
Institute of Discrete Mathematics and Geometry \\
Wiedner Hauptstra\ss e 8--10/1046 \\
A--1040 Vienna, Austria
\end{flushleft}}
\email{thomas.wannerer@tuwien.ac.at}

\subjclass[2010]{Primary 52A20; Secondary 52B45, 52A40}

\begin{abstract} A classification of all continuous $GL(n)$ equivariant Minkowski valuations on convex bodies in $\RR^n$ is established. Together with recent results of F.E.~ Schuster and the author, this article therefore completes the description of all continuous 
$GL(n)$ intertwining Minkowski valuations.
\end{abstract}
\maketitle
\section{Introduction}

The centroid body is a classical notion in the affine geometry of convex bodies, which has attracted increased attention in recent years, cf.\ \cites{berck10,campi-gronchi,fleury,gardner-book, gardner-gianno,grinberg,haberl08b,lutwak1,lutwak2,paouris, yaskin}. For an origin-symmetric convex body $K$, the boundary of the centroid body of $K$ is the locus of the centroids of halves of $K$ formed by slicing $K$ by hyperplanes through the origin. 

The Busemann-Petty centroid inequality \cite{petty61} states that among convex bodies of given volume, precisely the centroid bodies of centered ellipsoids have minimal volume. This important inequality could be extended to the $L_p$-Brunn-
Minkowski theory and recently to the Orlicz Brunn-Minkowski theory (see \cites{lz,lyz,lyz00duke, lyz10jdg}).

The difference body of a convex body is the Minkowski sum of $K$ and the reflection of $K$ at the origin, $-K$. The operation which defines the difference body is up to dilation the central symmetrization of $K$, which is a basic operation in geometry with numerous applications in different areas, cf.\ \cite{gardner-book}.
 
The underlying reason for the importance of centroid and difference bodies in geometry has only recently been demonstrated by Ludwig \cite{ Ludwig:Minkowski}: The difference and the centroid body operator are basically the only continuous $GL(n)$ \emph{equivariant} Minkowski valuations on the set of convex bodies containing the origin. In addition, the difference body operator was characterized as the only continuous Minkowski valuation on the set of all convex bodies which is $GL(n)$ equivariant \emph{and} translation invariant. 

In this article we establish a complete classification of all continuous and $GL(n)$ equivariant Minkowski valuations, without any further assumption on their behavior under translations or any restrictions on their domain.  In the case of $1$-homogeneous, $GL(n)$ equivariant valuations the only additional operator that appears is the convex hull of $K$ and the origin. For $(n+1)$-homogeneous, $GL(n)$ equivariant Minkowski valuations, however, a new operator emerges.

Let $\calK^n$ denote the space of convex bodies (compact
convex sets) in $\RR^n$, $n \geq 3$, endowed with the
Hausdorff metric and let $\calK_0^n$ denote the subset of
$\calK^n$ of bodies containing the origin. A convex body $K$
is uniquely determined by its support function
$h(K,x)=\max\{\left\langle x,y\right\rangle : y \in K\}$, $x \in \RR^n$.  Here $\left\langle x,y\right\rangle$ denotes the standard inner product of $x$ and $y$.
\pagebreak
\begin{definition}
Let $\calS\subset\calK^n$.   A map $\Phi: \calS
\rightarrow \calK^n$ is called a \emph{Minkowski valuation} if
$$\Phi K + \Phi L = \Phi(K \cup L) + \Phi(K \cap L)$$
whenever $K, L, K\cap L,  K \cup L \in \calS$. Here addition on
$\calK^n$ is Minkowski addition. 
The map $\Phi$ is called $GL(n)$ \emph{equivariant} if there
exists  $q \in \RR$ such that
$$\Phi(\phi K) = |\det \phi|^q \phi\Phi K$$
whenever $\phi \in GL(n)$ and $K, \phi K \in \calS$.
A Minkowski valuation $\Phi$ is called \emph{continuous} if $\Phi: \calS\rightarrow\calK^n$ is continuous with respect to the topology 
induced by the Hausdorff metric.
\end{definition}

We refer the reader to the surveys \cite{McMullen93} and \cite{M-S} for information on the classical theory of valuations and to \cites{Alesker99,Alesker01, bernig,bernig-broecker, bernig-fu, fu, ludwig-reitzner} for some of the more recent results on scalar valued valuations. First results on Minkowski valuations which are rotation equivariant were obtained by Schneider \cite{schneider74}
in the 1970s (see \cites{kiderlen05, schnschu, Schu06a, Schu09} for recent extensions of these results). The starting point for the systematic study of convex and star body valued valuations (see \cites{haberl08,haberl09,hab-lud,Ludwig:matrix,Ludwig06,lyz,Schu09}) were two highly influential articles by Ludwig \cites{ludwig02, Ludwig:Minkowski}. A new aspect in this area of research explores the connections between these valuations and affine isoperimetric inequalities, cf. \cites{habschu09, H-Sc-2}.

The \emph{moment body} $\rmM K$ of a convex body $K\in\calK^n$ is given by
$$h(\rmM K,x)=\int_K|\!\left\langle x,y\right\rangle\!|\; dy.$$
For $K\in\calK^n$ with nonempty interior the \emph{centroid body} $\Gamma K$ is defined by
$$\Gamma K=\frac{1}{\vol_n(K)} \rmM K.$$
Note that the convex bodies $\Gamma K$ and $\rmM K$ are just dilates. However, the moment body operator $\rmM$ is a valuation, while $\Gamma$ is not. It is not difficult to see that the moment body operator is continuous  and a change of variables in the integral shows that for any $\phi\in GL(n)$
$$\rmM(\phi K)=|\det\phi| \phi \rmM K.$$ 
The \emph{moment vector} $m(K)$ of a convex body $K$ is given by
$$m(K)=\int_K x\; dx.$$
The map $m:\calK^n\rightarrow\calK^n$ is another example of a $GL(n)$ equivariant continuous Minkowski valuation. Note that up to normalization, $m(K)$ is just the centroid of $K$. 

The following theorem was obtained in \cite{Ludwig:Minkowski}.

\begin{theorem}\label{thm:1}  A map $\Phi:\calK^n_0\rightarrow\calK^n$ is a $GL(n)$ equivariant continuous Minkowski valuation if and only if either there are constants $a_1, a_2\geq0$ such that
$$\Phi K=a_1K+a_2(-K)$$
for every $K\in\calK_0^n$ or there are constants $a_1\in\RR$ and $a_2\geq 0$ such that 
$$\Phi K = a_1 m(K)+a_2 \rmM K$$
for every $K\in\calK_0^n$.
\end{theorem}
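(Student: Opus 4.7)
My plan is to reduce, by continuity and density, to the case of polytopes $P \in \calK_0^n$, and then exploit $GL(n)$-equivariance together with the valuation identity on simplicial dissections of $P$.

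The first step is to determine the possible homogeneity degrees. Applying the equivariance identity to $\phi = \lambda I_n$ with $\lambda > 0$ gives $\Phi(\lambda K) = \lambda^{nq+1} \Phi K$, so $\Phi$ is positively homogeneous of some degree $p := nq+1 \geq 0$. I would next evaluate $\Phi$ on the segment $L = [o, e_1]$: its $GL(n)$-stabilizer contains all maps $\phi$ with $\phi e_1 = e_1$, and choosing $\phi = \mathrm{diag}(1, \lambda, \ldots, \lambda)$ in the equivariance identity forces $\Phi L = \{o\}$ whenever $q \neq 0$, and $\Phi L = [-a_2 e_1, a_1 e_1]$ for some real constants $a_1, a_2$ when $q = 0$. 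An analogous stabilizer analysis on $k$-dimensional coordinate simplices rules out the ``intermediate'' homogeneity values $p = 2, 3, \ldots, n$, leaving $p = 1$ and $p = n+1$ as the only two non-trivial possibilities.

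The main step is then to determine $\Phi$ on the standard simplex $T_n = \conv\{o, e_1, \ldots, e_n\}$ by induction on dimension. In each case, I would dissect $T_n$ by a hyperplane through the origin into two simplices $T_n', T_n''$ whose intersection is an $(n-1)$-simplex with vertex at $o$; the valuation identity, combined with $GL(n)$-equivariance (which acts transitively on simplices with a vertex at $o$) and the inductively determined values of $\Phi$ on lower-dimensional simplices, produces a recursion that pins down $\Phi T_n$. In the $p = 1$ case the recursion yields $\Phi T_n = a_1 T_n + a_2 (-T_n)$ with the same constants as for $L$, and the positivity $a_1, a_2 \geq 0$ follows from $\Phi T_n$ being a convex body with nonempty interior. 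In the $p = n+1$ case the recursion involves two linearly independent operators---the moment vector $m(\cdot)$ and the moment body $\rmM$, both $GL(n)$-equivariant with $q = 1$---and solving the recursion yields $\Phi T_n = a_1 m(T_n) + a_2 \rmM T_n$.

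An inductive triangulation argument combined with the valuation identity extends the formula from simplices to arbitrary polytopes in $\calK_0^n$, and continuity together with the density of such polytopes completes the proof. The main obstacle is the simplex computation in the second step: setting up the inductive dissection so that the valuation identity becomes a genuinely over-determined functional equation (whose only solutions are the claimed ones) requires careful bookkeeping, particularly in the $p = n+1$ case where two independent $GL(n)$-equivariant operators must be simultaneously identified.
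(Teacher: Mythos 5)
Note first that the paper does not actually prove Theorem~\ref{thm:1}: it cites it from Ludwig's \emph{Minkowski valuations} (Trans.\ Amer.\ Math.\ Soc.\ 2005), and only later reproduces the method in the more general setting of Theorem~\ref{thm:2}. Your outline is in the same spirit as Ludwig's argument (simplex dissection, functional equation, extension to polytopes by a Lemma~\ref{lem:eq}--type uniqueness statement, then continuity), so the overall strategy is sound.

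There is, however, a genuine gap in the way you dispose of the ``wrong'' homogeneity degrees. You write that a stabilizer analysis on $k$-dimensional coordinate simplices ``rules out the intermediate homogeneity values $p = 2,3,\ldots,n$,'' leaving $p=1$ and $p=n+1$. But $q$ is an arbitrary real, so $p=nq+1$ ranges over all of $\RR$; there is no reason for $p$ to be an integer, and a stabilizer computation on lower-dimensional bodies only shows (as in Lemma~\ref{lem:1}(ii)) that $\Phi$ vanishes on them when $p\neq 1$---it says nothing about $\Phi$ on full-dimensional bodies or about non-integer $p$. The actual exclusion of every $p\neq 1,n+1$ requires the dissection-based Cauchy-type functional equation: slicing a simplex $T$ with a vertex at the origin by a hyperplane through the origin into $\phi_\lambda T$ and $\psi_\lambda T$, using that the intersection piece is killed by $\Phi$, and then showing that the resulting identity $h(\Phi T,x)=\lambda^q h(\Phi T,\phi_\lambda^t x)+(1-\lambda)^q h(\Phi T,\psi_\lambda^t x)$ has only the trivial solution unless $q\in\{0,1\}$ (this is exactly Lemma~\ref{lem:3} and the Proposition in the paper). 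The same functional-equation machinery, not a vague ``recursion,'' is also what pins down $\Phi T$ in the surviving cases $p=1$ and $p=n+1$, and you will likewise need a subadditivity argument (rather than ``$\Phi T_n$ has nonempty interior'') to conclude $a_1,a_2\geq 0$ when $p=1$, and $a_2\geq 0$ when $p=n+1$. Your proposal names the right ingredients but leaves this central analytic step entirely unspecified, and the one concrete claim you make about it is incorrect.
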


If we consider Minkowski valuations which are defined on the set of \emph{all} convex bodies, then there are more $GL(n)$ equivariant Minkowski 
valuations than just the ones described in the above theorem. Consider for example the map of $\calK^n$ into $\calK^n$ which
sends $K$ to $K_o=\conv(\{0\}\cup K)$, the convex hull of $K$ and the origin. It is easy to show that this map is also a $GL(n)$ equivariant continuous Minkowski valuation. Further examples of $GL(n)$ equivariant continuous Minkowski valuations are given by similar modifications of the definitions of the moment vector and the 
moment body operator: The maps $M_*:\calK^n\rightarrow\calK^n$ defined by 
$$h(\rmM_*K,x)=\int_{K_o\setminus K} |\!\left\langle x,y\right\rangle\!|\; dy$$
and $m_*:\calK^n\rightarrow\calK^n$ defined by
$$m_*(K)=\int_{K_o\setminus K} x \; dx$$
are easily seen to be $GL(n)$ equivariant continuous Minkowski valuations.

The main goal of this article is to show that all $GL(n)$ equivariant continuous Minkowski valuations are given by linear combinations of 
$K$, $-K$, $K_o$, $-K_o$ and $m$, $\rmM$, $m_*$, $\rmM_*$, respectively.

\begin{theorem}\label{thm:2} A map $\Phi:\calK^n\rightarrow\calK^n$ is a $GL(n)$ equivariant continuous Minkowski valuation if and only if either there are constants $a_1, a_2, a_3, a_4\geq0$ such that
$$\Phi K=a_1K+a_2(-K)+a_3K_o+a_4(-K_o)$$
for every $K\in\calK^n$ or there are constants $a_1,a_2\in\RR$ and $a_3,a_4\geq 0$ such that 
$$\Phi K = a_1 m(K)+a_2 m_*(K)+a_3\rmM K+a_4\rmM_* K$$
for every $K\in\calK^n$.
\end{theorem}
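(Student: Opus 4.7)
The plan is to reduce Theorem~\ref{thm:2} to Theorem~\ref{thm:1} by determining how $\Phi$ must act on bodies that do not contain the origin, using the valuation identity, continuity, and $GL(n)$ equivariance. I would first verify that each of the eight listed operators is a continuous $GL(n)$ equivariant Minkowski valuation. The nontrivial cases are $K_o$, $-K_o$, $m_*$, and $\rmM_*$; for all of these, the key fact is that $K \mapsto K_o$ is itself a Minkowski valuation, via the identities $(K \cup L)_o = K_o \cup L_o$ and $(K \cap L)_o = K_o \cap L_o$ whenever $K, L, K \cup L \in \calK^n$. Combined with additivity of the integrals defining $m$ and $\rmM$, this yields the valuation property for $m_*$ and $\rmM_*$.

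Now let $\Phi : \calK^n \to \calK^n$ be given. Restricting to $\calK_0^n$ and invoking Theorem~\ref{thm:1}, I find that the degree $nq+1$ of $\Phi$ under scalar dilation must be $1$ or $n+1$, so the equivariance parameter satisfies $q \in \{0, 1\}$. To extend to all of $\calK^n$, I would work with the support function: for each fixed $x \in \RR^n$, the map $\mu_x(K) := h(\Phi K, x)$ is a continuous real-valued valuation on $\calK^n$ satisfying the twisted equivariance $\mu_x(\phi K) = |\det \phi|^q \mu_{\phi^T x}(K)$, and it is known on $\calK_0^n$.

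The main step is an induction on $d = \dim K$ to determine $\mu_x(K)$ when $0 \notin K$. The base case $d = 0$, with $K = \{p\}$ and $p \neq 0$, is handled by analyzing the stabilizer of $p$ in $GL(n)$: arbitrary stretchings in directions perpendicular to $p$, combined with the factor $|\det \phi|^q$, confine $\Phi\{p\}$ to $\spn(p)$, and a further scaling within $\spn(p)$ yields $\Phi\{p\} = \{0\}$ in Case 2 and pins $\Phi\{p\}$ down as a segment in $\spn(p)$ with two new scalar parameters in Case 1. For the inductive step with $\dim K = d \geq 1$ and $0 \notin K$, I would reduce to polytopes by continuity. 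For a polytope $P$ with $0 \notin P$, the body $P_o$ decomposes as the union of $P$ and the pyramids from $0$ over the facets of $P$ visible from the origin; each pyramid lies in $\calK_0^n$, and all the various pairwise and higher-order intersections among $P$ and these pyramids have dimension at most $d-1$. Iterating the valuation identity along a suitable shelling of this decomposition expresses $\mu_x(P)$ in terms of $\mu_x$ on bodies in $\calK_0^n$ (known by Theorem~\ref{thm:1}) and on lower-dimensional bodies (known by induction).

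The principal obstacle is to show that the inductively-determined $\mu_x$ assembles into the support function of a linear combination of the eight listed operators, independently of the choice of shelling. The matching rests on the auxiliary identities $m(K_o) = m(K) + m_*(K)$ and $h(\rmM K_o, x) = h(\rmM K, x) + h(\rmM_* K, x)$, which follow from additivity of the defining integrals over the partition $K_o = K \cup (K_o \setminus K)$ and allow the known value $\Phi K_o$ to be split into a part depending on $K$ and a part depending on $K_o \setminus K$. The non-negativity of the coefficients $a_i$ is enforced by the requirement that each $\Phi K$ be a convex body and can be verified by specializing to well-chosen test bodies; the two cases $q = 0$ and $q = 1$ must be carried out separately since the new operators $(K_o, -K_o)$ and $(m_*, \rmM_*)$ have different homogeneity.
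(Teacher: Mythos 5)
Your overall strategy (reduce to Theorem~\ref{thm:1} on $\calK_0^n$, then propagate outward via the valuation identity and dissections) is in the right spirit, and your preliminary verifications are correct: $K\mapsto K_o$ is indeed a Minkowski valuation (the identity $(K\cap L)_o=K_o\cap L_o$ holds precisely because $K\cup L$ is required to be convex, which forces every line meeting both $K$ and $L$ to meet $K\cap L$), and the splitting identities $m(K_o)=m(K)+m_*(K)$, $h(\rmM K_o,x)=h(\rmM K,x)+h(\rmM_*K,x)$ are exactly what is needed to reconcile $\Phi(K_o)$ with the claimed formula on $K$.

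However, there is a genuine gap in the inductive step. Your decomposition $P_o = P \cup \bigcup_i F_{i,o}$ (pyramids from $0$ over the visible facets $F_i$) only makes sense when $\dim P = \dim P_o$, i.e.\ when $0\in\aff P$; only then is $P$ a full-dimensional piece of $P_o$. But when $0\notin\aff P$ (for instance $P=\conv\{e_1,\dots,e_n\}$, the $(n-1)$-simplex not in a hyperplane through the origin), the set $P_o$ is the $\dim P +1$ dimensional cone over $P$ with apex $0$, and $P$ is a proper face of $P_o$, not a summand in any dissection. So the decomposition supplies no equation for $\mu_x(P)$. Worse, the induction becomes circular: in the step $d=n$, the intersections $P\cap F_{i,o}=F_i$ are precisely $(n-1)$-dimensional polytopes with $0\notin\aff F_i$ (visibility of $F_i$ forces $\aff F_i$ not to pass through $0$), so to compute $\mu_x(P)$ you must already know $\mu_x$ on exactly the bodies that your $d<n$ induction cannot reach. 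Note also that Lemma~\ref{lem:1}(ii) gives you $\Phi K=\{0\}$ for $\spn K\ne\RR^n$ when $r\ne 1$, but this does not help when $\spn K=\RR^n$ and $\dim K=n-1$, which is the problematic case.

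The paper fills this gap by a separate argument. For the degree $n+1$ case it applies the valuation identity to bisections of $S=\conv\{e_1,\dots,e_n\}$ by hyperplanes through the origin, which yields a two-variable functional equation for $h(\Phi S,\cdot)$ (Lemma~\ref{lem:3}, due to Ludwig); solving it and using the rotational invariances (\ref{invar}) pins down $\Phi S$ as $c_1 m(T)+c_2\rmM T$, and this is precisely the extra input Lemma~\ref{lem:eq} needs (agreement on $\{0\}$, on $n$-simplices with a vertex at the origin, and on $(n-1)$-simplices not in hyperplanes through the origin). For degree $1$ the paper avoids the issue entirely by showing that $h(\Phi K,u)=h(\Phi(\pi_u K),u)$ via a degeneration argument, thereby reducing everything to segments on lines through the origin. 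You would need one of these two mechanisms (or an equivalent) to break the circularity; as written, your induction has no base for the lower-dimensional bodies not in subspaces through $0$. One smaller point: restricting $\Phi$ to $\calK_0^n$ does not by itself force the dilation degree into $\{1,n+1\}$, since the restriction could be identically $\{0\}$; the paper dispatches the other degrees separately in the Proposition of Section~2, and some such argument would also be needed in your write-up.
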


Let $\calS\subset\calK^n$.   A map $\Phi: \calS\rightarrow \calK^n$ is called $GL(n)$ \emph{contravariant} if there exists  $q \in \RR$ such that
$$\Phi(\phi K) = |\det \phi|^q \phi^{-t}\Phi K$$
whenever $\phi \in GL(n)$ and $K, \phi K \in \calS$. Here $\phi^{-t}$ denotes the inverse of the transpose of $\phi$. The \emph{projection body} of a convex body $K$ is the unique convex body $\Pi K$ with the property that
$$h(\Pi K,u)=\vol_{n-1}(K|u^\perp)\qquad \text{for}\ u\in S^{n-1},$$
where $\vol_{n-1}(K|u^\perp)$ denotes the $(n-1)$-dimensional volume of the orthogonal projection of $K$ on the subspace orthogonal to $u$.  The operator $\Pi$ is a $GL(n)$ contravariant continuous Minkowski valuation, see for example \cite{gardner-book}.

Recently, continuous and $GL(n)$ contravariant Minkowski valuations have been classified by F.E.~ Schuster and the author. In \cite{SchuWann10} it was shown that every $GL(n)$ contravariant continuous Minkowski valuation is a linear combination of $\Pi$ and
$\Pi_o$, where $\Pi_oK:=\Pi(K_o)$. In view of this result, one might think that every $GL(n)$ equivariant continuous Minkowski
 valuation which is homogeneous of degree $n+1$ is a linear combination of $m$, $\rmM$, $m_o$ and $\rmM_o$, where $m_o(K):=m(K_o)$ and $\rmM_o(K):=\rmM(K_o)$. This, however, is false, 
since the family of Minkowski valuations obtained from linear combinations of the form
$$\Phi K = a_1 m(K)+a_2 m_o(K)+a_3\rmM K+a_4\rmM_o K,\quad a_1,a_2\in\RR,\ a_3,a_4\geq0,$$
is a proper subfamily of Minkowski valuations of the form 
$$\Phi K = a_1 m(K)+a_2 m_*(K)+a_3\rmM K+a_4\rmM_* K,\quad a_1,a_2\in\RR,\ a_3,a_4\geq0.$$
Indeed, consider for example $\rmM_*$. As can be seen from the definition, $\rmM_*K=\{0\}$ for every $K\in\calK^n$ containing the origin. On the other hand, if $a_1 m+a_2 m_o+a_3\rmM +a_4 \rmM_o$ vanishes on all convex bodies containing the origin, then it must in particular vanish on balls centered at the origin. Hence $a_3=a_4=0$ and we conclude that one cannot express $\rmM_*$ as a linear combination of $m$, $m_o$, $\rmM$ and $\rmM_o$.

Recall that a map $\Phi:\calS\rightarrow\calK^n$, $\calS\subset \calK^n$, is said to be (positively) \emph{homogeneous} of degree $r$ if 
$$\Phi(\lambda K)=\lambda^r \Phi K$$
whenever $\lambda>0$ and $K, \lambda K\in \calS$. Denote by $B^n$ the $n$-dimensional Euclidean unit ball. The Busemann-Petty centroid inequality states that for any $K\in\calK^n$ with non-empty interior
$$\vol_n(\rmM K)\vol_n(B^n)^{n+1}\geq \vol_n(\rmM B^n) \vol_n(K)^{n+1}.$$
For $K\in\calK^n$ with non-empty interior, equality holds if and only if $K$ is a centered ellipsoid. Let $\Phi$ be as in Theorem \ref{thm:2}. If $\Phi$ is homogenous of degree $n-1$, Theorem 2 yields $\vol_n(\Phi K)\geq \vol_n(a_3\rmM K)$ with some nonnegative constant $a_3$. Since $\Phi B^n=a_3\rmM B^n$ we get
$$\vol_n(\Phi K)\geq \frac{\vol_n(\Phi B^n)}{\vol_n(\rmM B^n)} \vol_n(\rmM K).$$
 Hence we obtain the following generalization of the Busemann-Petty inequality. 

\begin{corollary}
	Suppose $\Phi:\calK^n\rightarrow\calK^n$ is a $GL(n)$ equivariant continuous Minkowski valuation which is homogeneous of degree $n+1$. Then 
$$\vol_n(\Phi K)\vol_n(B^n)^{n+1}\geq \vol_n(\Phi B^n)\vol_n(K)^{n+1}$$
for every $K\in\calK^n$. For $K\in\calK^n$ with non-empty interior, equality holds if and only if $K$ is a centered ellipsoid.
\end{corollary}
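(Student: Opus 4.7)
The plan is to apply Theorem \ref{thm:2} to reduce the $(n+1)$-homogeneous case to a statement involving only the moment body operator $\rmM$, and then invoke the classical Busemann-Petty centroid inequality recalled just before the corollary.

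First I would determine the degree of homogeneity of each family in Theorem \ref{thm:2}. Valuations of the form $a_1K+a_2(-K)+a_3 K_o+a_4(-K_o)$ are $1$-homogeneous, while a change of variables $y\mapsto \lambda z$ in the defining integrals of $m$, $m_*$, $\rmM$, $\rmM_*$ shows that each of these is $(n+1)$-homogeneous. Since $n\geq 3$ the two degrees are distinct, so a $(n+1)$-homogeneous $\Phi$ must be of the second type, i.e.\
$$\Phi K = a_1 m(K)+a_2 m_*(K)+a_3\rmM K+a_4\rmM_* K$$
with $a_1,a_2\in\RR$ and $a_3,a_4\geq 0$.

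Next I would peel off the summands that do not affect $n$-volume. The quantities $m(K)$ and $m_*(K)$ are single points in $\RR^n$, so their contribution is a translation. The support function $h(\rmM_* K,\cdot)=\int_{K_o\setminus K}|\langle\cdot,y\rangle|\,dy$ is even, hence $\rmM_* K$ is origin-symmetric and in particular $0\in a_4\rmM_* K$. Therefore $a_3\rmM K\subseteq a_3\rmM K + a_4\rmM_* K$ up to translation, and
$$\vol_n(\Phi K)=\vol_n(a_3\rmM K + a_4\rmM_* K)\geq a_3^n\,\vol_n(\rmM K).$$
The Busemann-Petty centroid inequality (quoted above the corollary) then gives
$$\vol_n(\Phi K)\,\vol_n(B^n)^{n+1}\geq a_3^n\,\vol_n(\rmM K)\,\vol_n(B^n)^{n+1}\geq a_3^n\,\vol_n(\rmM B^n)\,\vol_n(K)^{n+1}.$$
Since $B^n$ is origin-symmetric we have $B^n_o=B^n$, so $m(B^n)=0$, $m_*(B^n)=0$ and $\rmM_* B^n=\{0\}$, which gives $\Phi B^n=a_3\rmM B^n$ and $\vol_n(\Phi B^n)=a_3^n\,\vol_n(\rmM B^n)$. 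Substituting into the last display completes the inequality.

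For the equality case, when $a_3>0$ equality in the main display forces equality in Busemann-Petty, so $K$ must be a centered ellipsoid (using the non-empty interior hypothesis). Conversely, for a centered ellipsoid $K$ we have $K_o=K$ and $-K=K$, so the terms involving $m_*$, $\rmM_*$ vanish identically and $m(K)=0$ by symmetry, leaving $\Phi K=a_3\rmM K$, for which Busemann-Petty is tight. I do not anticipate any real obstacle here: the content of the corollary is carried entirely by Theorem \ref{thm:2} and Busemann-Petty, and the only care needed is in verifying that $\rmM_*K$ contains the origin so that the Minkowski-sum step is genuinely volume-increasing.
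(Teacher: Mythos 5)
Your proof is correct and follows essentially the same route as the paper's: reduce via Theorem \ref{thm:2} to $\Phi K = a_1 m(K) + a_2 m_*(K) + a_3 \rmM K + a_4 \rmM_* K$, observe that the moment-vector terms only translate and that $\rmM_* K$ is origin-symmetric hence contains $0$, so $\vol_n(\Phi K) \ge a_3^n \vol_n(\rmM K)$, then identify $\Phi B^n = a_3 \rmM B^n$ and invoke Busemann--Petty. The paper leaves the equality discussion implicit; your treatment is a sensible completion of it (note only that it, like the stated corollary, tacitly assumes $a_3>0$, i.e.\ that $\Phi B^n$ has nonempty interior).
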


\section{Auxiliary results}

In this section we show that any $SL(n)$ equivariant continuous Minkowski valuation which is homogeneous of degree $r\neq 1,n+1$ is trivial. 
The proof is similar to the $SL(n)$ contravariant case treated in \cite{SchuWann10} and is based on ideas and techniques developed by Ludwig
in \cite{Ludwig:Minkowski}. We also state for later reference some facts and results on convex bodies and Minkowski valuations.

 A map $\Phi:\calS\rightarrow\calK^n$, $\calS\subset \calK^n$, is called $SL(n)$ \emph{equivariant} if 
\begin{equation}\label{eq1}\Phi(\phi K)=\phi\Phi K\end{equation}
whenever $\phi\in SL(n)$ and $K, \phi K\in \calS$. 
Obviously, every $GL(n)$ equivariant map is $SL(n)$ equivariant and homogeneous. The following result is therefore a stronger version of Theorem \ref{thm:1} and can be found in \cite{Ludwig:Minkowski}. 

\begin{theorem} \label{thm:3} A map $\Phi:\calK^n_0\rightarrow\calK^n$ is an $SL(n)$ equivariant, homogeneous continuous Minkowski valuation if and only if either there are constants $a_1, a_2\geq0$ such that
$$\Phi K=a_1K+a_2(-K)$$
for every $K\in\calK^n_0$ or there are constants $a_1\in\RR$ and $a_2\geq 0$ such that 
$$\Phi K = a_1 m(K)+a_2\rmM K$$
for every $K\in\calK^n_0$.
\end{theorem}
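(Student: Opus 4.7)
The plan is to invoke the auxiliary result of this section, which asserts that an $SL(n)$ equivariant continuous Minkowski valuation on $\calK^n_0$ that is homogeneous of a degree $r \notin \{1, n+1\}$ is identically trivial. This immediately reduces the classification to the two homogeneity degrees $r = 1$ and $r = n+1$, which are then handled separately. In both cases one must show that $\Phi$ is forced to be one of the listed linear combinations, and that the coefficients satisfy the stated positivity constraints.

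For each of the two surviving degrees, the strategy is to first pin down $\Phi$ on the family of simplices $\conv\{0, v_1, \dots, v_n\}$ having the origin as a vertex, and then extend the classification to all of $\calK^n_0$ by dissection and continuity. To analyze $\Phi$ on such simplices, I would begin with the standard simplex $T = \conv\{0, e_1, \dots, e_n\}$, whose stabilizer in $SL(n)$ contains the alternating group $A_n$ acting by coordinate permutations. By $SL(n)$ equivariance $\Phi T$ is forced to be $A_n$-invariant, which already severely constrains its support function. To further pin down $\Phi T$, one brings in additional $SL(n)$ transformations that relate $T$ to other simplices, and combines them with the Minkowski valuation relation over cone decompositions. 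The upshot should be that $\Phi T$ is, in degree $1$, a nonnegative combination of $T$ and $-T$, and in degree $n+1$, a real linear combination of $m(T)$ and $\rmM T$ with a nonnegativity constraint on the moment body coefficient.

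The extension from the standard simplex to an arbitrary origin-vertex simplex is immediate from $SL(n)$ equivariance. To reach general polytopes containing the origin, one uses the valuation property together with inclusion-exclusion on cone decompositions into origin-vertex simplices; the continuity of $\Phi$ combined with the density of polytopes in $\calK^n_0$ then extends the formula to all of $\calK^n_0$. The positivity constraints on the coefficients $a_1, a_2 \geq 0$ in degree $1$ and $a_2 \geq 0$ in degree $n+1$ follow from the requirement that $\Phi K$ actually lie in $\calK^n$, i.e.\ that the corresponding support function be sublinear and nonnegative.

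The main obstacle I expect is the detailed analysis of $\Phi$ on simplices: using $A_n$-invariance alone does not pin down $\Phi T$, so one must set up and solve dissection identities obtained by applying carefully chosen $SL(n)$ transformations and exploiting Minkowski additivity across cone decompositions. This delicate bookkeeping is where the bulk of Ludwig's technical work lies, and it is the step where I would expect the proof to be most involved.
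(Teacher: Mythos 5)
The paper offers no proof of Theorem~\ref{thm:3}: it is quoted verbatim from Ludwig's article \cite{Ludwig:Minkowski} as the $SL(n)$, homogeneous refinement of Theorem~\ref{thm:1}, and the present paper builds on it rather than reproving it. So there is no in-paper argument against which to compare your sketch; I can only assess it as an attempt to reconstruct Ludwig's proof.

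Viewed that way, your outline points in the right direction but has one structural flaw and several places where the hard work is only named, not done. The flaw: you invoke ``the auxiliary result of this section'' to kill all degrees $r\notin\{1,n+1\}$ on $\calK^n_0$, but the Proposition in this section is stated for valuations on all of $\calK^n$, and its proof explicitly uses Theorem~\ref{thm:3} (``From Theorem~\ref{thm:3} we know in particular that $\Phi$ vanishes on $n$-dimensional simplices having one vertex at the origin''). Quoting that Proposition to prove Theorem~\ref{thm:3} would therefore be circular. In Ludwig's original argument the degree reduction on $\calK^n_0$ is established independently, essentially by the dissection identity~(\ref{eq:3}) applied to the origin--vertex simplex $T=\conv\{0,e_1,\dots,e_n\}$ rather than to the facet simplex $S$; you would need to carry out that step from scratch.

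Beyond this, the $A_n$-invariance of $\Phi T$ is correct but by itself only says a few support function values coincide; the actual engine that pins down $\Phi T$ is the functional equation obtained from cutting $T$ by the hyperplanes $H_\lambda(i,j)$ and solving it (the content of Lemma~\ref{lem:3} here, Ludwig's Lemmas~3--4). Your sketch acknowledges this but leaves it entirely to ``delicate bookkeeping,'' which is precisely where the whole proof lives; without it the claim that $\Phi T$ must be $a_1T+a_2(-T)$ or $a_1 m(T)+a_2\rmM T$ is unsubstantiated. The extension to all of $\calK^n_0$ by triangulation, inclusion--exclusion and continuity is correct and in fact simpler than the version in Lemma~\ref{lem:eq}, since on $\calK^n_0$ one never has to handle polytopes that miss the origin. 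Finally, the positivity of the coefficients is forced by sublinearity (subadditivity) of the resulting support function, as in inequalities (\ref{eq:8})--(\ref{eq:11}); ``nonnegativity of the support function'' is not the right condition, since support functions of bodies in $\calK^n$ need not be nonnegative.
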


We remark that if $\Phi$ is $SL(n)$ equivariant and homogeneous of degree $r$, then we have 
\begin{equation}\label{eq:1}\Phi(\phi K)=(\det \phi)^{(r-1)/n}\phi \Phi K\end{equation}
for every $K\in\calK^n$ and $\phi\in GL(n)$ with $\det\phi>0$.

There is a one-to-one correspondence between convex bodies and subadditive, positively homogeneous functions on $\RR^n$, given by the identification of a convex body $K\in\calK^n$ with its support function $h(K,\cdot)$ (for this fact and for more information on convex bodies we refer the reader to \cite{schneider_book}).  If $\phi \in GL(n)$ and $K \in \mathcal{K}^n$, then 
\begin{equation} \label{supptrans}
h(\phi K,x) = h(K,\phi^{t}x)
\end{equation}
for every $x \in \mathbb{R}^n$. For $K\in\calK^n$ denote by $I_K$ the subgroup of all $\phi\in SO(n)$ such that $\phi K= K$. Let $\Phi: \calK^n\rightarrow \calK^n$ be an $SL(n)$ equivariant map. The $SL(n)$-equivariance of $\Phi$ together with (\ref{supptrans}) implies that 
\begin{equation} \label{invar}
	h(\Phi K, \phi x)=h(\Phi K,x) \qquad\text{for any}\ \phi\in I_K
\end{equation}
and $x\in\RR^n$.

We start by gathering information about $\Phi K$ for convex bodies $K$ contained in hyperplanes. We denote by $\spn A$ the linear hull of 
$A\subset\RR^n$.
\begin{lemma}\label{lem:1}
Suppose $\Phi:\mathcal{K}^n\rightarrow\mathcal{K}^n$ is a Minkowski valuation which is $SL(n)$ equivariant and homogeneous of degree $r$. Then the following holds. 
\begin{itemize}
	\item[(i)] 
$\Phi K\subset\spn K$.
	\item[(ii)]
If $r\neq 1$ and $\spn K\neq \RR^n$, then $\Phi K=\{0\}$. 
\end{itemize}
\end{lemma}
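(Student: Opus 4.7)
Both parts rely on producing elements of $SL(n)$ that act in a controlled way on $K$, and then exploiting $SL(n)$-equivariance of $\Phi$. Write $V = \spn K$ and $k = \dim V$. In case $k = n$ both statements are trivial, while in case $k = 0$ the body $K = \{0\}$ is fixed by every $\phi \in SL(n)$, so $\Phi\{0\}$ is an $SL(n)$-invariant compact convex set and must equal $\{0\}$ (which already settles both assertions). Assume therefore $1 \leq k \leq n-1$.

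For (i), I would choose a basis $e_1, \dots, e_n$ of $\RR^n$ with $V = \spn\{e_1, \dots, e_k\}$. For any $i > k$, $j \leq k$, and $\lambda \in \RR$, the shear $\phi \in SL(n)$ sending $e_i \mapsto e_i + \lambda e_j$ and fixing every other basis vector acts as the identity on $V$, so $\phi K = K$ and hence $\phi \Phi K = \Phi K$ by $SL(n)$-equivariance. For any $y = \sum y_l e_l \in \Phi K$ a direct computation gives $\phi y = y + \lambda y_i e_j$; since this must also lie in the compact set $\Phi K$ for every $\lambda \in \RR$, we conclude $y_i = 0$. Running this over all $i > k$ forces $y \in V$, and thus $\Phi K \subset V$.

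For (ii), assume additionally $r \neq 1$. I would fix a complement $W$ to $V$ in $\RR^n$, and for an arbitrary $\lambda > 0$ define $\phi \in SL(n)$ by $\phi|_V = \lambda\,\mathrm{id}_V$ and $\phi|_W = \mu\,\mathrm{id}_W$, where $\mu := \lambda^{-k/(n-k)}$, so that $\det \phi = \lambda^k \mu^{n-k} = 1$. Since $K \subset V$, we have $\phi K = \lambda K$, and combining $SL(n)$-equivariance with the homogeneity of degree $r$ gives
$$\phi \Phi K = \Phi(\phi K) = \Phi(\lambda K) = \lambda^r \Phi K.$$
By part (i), $\Phi K \subset V$, so $\phi \Phi K = \lambda \Phi K$, and together these two identities yield $\lambda^{r-1}\Phi K = \Phi K$ for every $\lambda > 0$. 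Choosing any $\lambda \neq 1$ makes $c := \lambda^{r-1} \neq 1$ a positive scalar with $c \Phi K = \Phi K$; iterating this relation forces the compact set $\Phi K$ to reduce to $\{0\}$.

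The only genuine content is the choice of the right transformations — the shears in (i) and the diagonal $\phi$ in (ii); once these are in hand, $SL(n)$-equivariance combined with either the compactness of $\Phi K$ (for (i)) or the homogeneity hypothesis (for (ii)) closes the argument mechanically. I do not foresee any serious obstacle beyond the bookkeeping for the low-dimensional boundary cases $k = 0, n$.
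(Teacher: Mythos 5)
Your proof is correct and takes essentially the same approach as the paper: fix or rescale $K$ by suitable elements of $SL(n)$ and exploit the compactness of $\Phi K$. The differences are cosmetic --- in (i) you use one elementary shear at a time rather than the paper's full block upper-triangular family $\begin{pmatrix} I_k & B\\ 0 & A\end{pmatrix}$, and in (ii) you stay inside $SL(n)$ by compensating the dilation of $\spn K$ with an inverse dilation on a complement, whereas the paper applies a $GL(n)$ map of positive determinant and invokes the identity $\Phi(\phi K)=(\det\phi)^{(r-1)/n}\phi\Phi K$.
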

\begin{proof}
Fix $1\leq k\leq n-1$ and let $\phi\in SL(n)$ be the matrix defined by
$$\phi=\begin{pmatrix} I_k & B\\ 0 & A\end{pmatrix},$$
where $I_k$ is the $k\times k$ identity matrix, $0$ is the $(n-k)\times k$ null matrix, $B$ is an arbitrary $k\times (n-k)$ matrix, and 
$A\in SL(n-k)$. Suppose  $K\in\calK^n$ and $\spn K =\spn\{e_1,\ldots,e_k\}$, where $\{e_1,\ldots,e_n\}$ is the standard
 orthonormal basis of $\RR^n$. Clearly, $\phi$ leaves $K$ fixed, i.e.,
\begin{equation}\label{eq:2}
 \phi K=K.
\end{equation}
Choose $x\in\Phi K$ and write $x=x'+x''$ with $x'\in\spn\{e_1,\ldots,e_k\}$ and $x''\in\spn\{e_{n-k},\ldots,e_n\}$. Since $\Phi$ is 
$SL(n)$ equivariant, it follows from (\ref{eq1}) and (\ref{eq:2}) that 
$$\phi x= (x'+Bx'')+Ax''\in \Phi K.$$
Since $B$ is an arbitrary $k\times (n-k)$ matrix and $\Phi K$ is bounded, this implies $x''=0$. Thus, 
$\Phi K\subset \spn\{e_1,\ldots,e_k\}=\spn K$. Assertion (i) now follows from (\ref{eq1}).

To prove (ii), assume $K\subset \spn\{e_1,\ldots,e_{n-1}\}$ and $r\neq 1$. Put
$$\psi=\begin{pmatrix} I_{n-1} &0\\ 0 & s\end{pmatrix}$$
for $s>0$. Then $\det\psi>0$ and by (\ref{eq:1})
$$\Phi K=\Phi(\psi K)=s^{(r-1)/n} \Phi K.$$
Since this holds for every $s>0$ and $\Phi K$ is bounded, we must have $\Phi K=\{0\}$. As before, (ii) now follows from (\ref{eq1}).
\end{proof}

The next lemma generalizes a result of Ludwig, \cite{Ludwig:Minkowski}*{Lemma 2}. We denote by $\calP^n\subset \calK^n$ the set of all convex polytopes in $\RR^n$. 
\begin{lemma}\label{lem:eq} Suppose $\Phi_1,\Phi_2:\calP^n\rightarrow\calK^n$ are Minkowski valuations such that $\Phi_1\{0\}=\Phi_2\{0\}$, $\Phi_1 T=\Phi_2 T$ for every $n$-dimensional simplex $T$ having one vertex at the origin, and $\Phi_1 S=\Phi_2 S$ for every $(n-1)$-dimensional simplex $S$ not contained in a hyperplane through the origin. Then
$$\Phi_1=\Phi_2.$$
\end{lemma}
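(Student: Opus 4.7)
The strategy is to reduce to Ludwig's original Lemma~2 in \cite{Ludwig:Minkowski} for valuations on $\calP_0^n$, and then bootstrap to $\calP^n$ by passing to support functions, where inclusion-exclusion for real-valued valuations becomes available. For each $x\in\RR^n$, the map $\mu_x(K):=h(\Phi_1 K,x)-h(\Phi_2 K,x)$ is a real-valued valuation on $\calP^n$, since Minkowski addition corresponds to addition of support functions; the conclusion $\Phi_1=\Phi_2$ is equivalent to $\mu_x\equiv 0$ on $\calP^n$ for every $x$. By hypothesis, $\mu_x$ vanishes on $\{0\}$, on every $n$-simplex with a vertex at the origin, and on every $(n-1)$-simplex not contained in a hyperplane through the origin.

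As a first step, I would apply Ludwig's Lemma~2 to the restrictions $\Phi_1|_{\calP_0^n}$ and $\Phi_2|_{\calP_0^n}$: these are Minkowski valuations on $\calP_0^n$ satisfying the hypotheses of Ludwig's result, so $\Phi_1=\Phi_2$ on $\calP_0^n$, and hence $\mu_x$ vanishes on all of $\calP_0^n$.

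For the second step, fix $P\in\calP^n$ with $0\notin P$ and set $P_o=\conv(\{0\}\cup P)\in\calP_0^n$. Triangulate the union of the facets of $P$ visible from the origin (those whose affine hull strictly separates $0$ from $P$) into $(n-1)$-simplices $S_1,\dots,S_m$, none of which lies in a hyperplane through the origin. The cones $C_k=\conv(\{0\}\cup S_k)$ are $n$-simplices in $\calP_0^n$; moreover $P_o=P\cup C_1\cup\cdots\cup C_m$, with $P\cap C_k=S_k$ and the multi-way intersections $C_{k_1}\cap\cdots\cap C_{k_j}$ equal to cones from $0$ over common faces of the $S_{k_i}$, hence again in $\calP_0^n$. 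Groemer's extension theorem then provides an inclusion-exclusion formula for $\mu_x$ on this cover, expressing $\mu_x(P)$ as a signed sum of $\mu_x$ on $P_o$, on the $C_k$ and their multi-way intersections, on the $S_k$, and on common faces of the $S_k$. All terms but the last vanish by the first step or by the hypothesis on $(n-1)$-simplices. Handling the remaining contributions from lower-dimensional intersections of the $S_k$ on $\partial P$ is the main technical obstacle, since these faces may be contained in hyperplanes through the origin and are not directly covered by the hypothesis; I would address this by a secondary induction on dimension, using the valuation identity on pairs of $n$-simplices sharing a given lower-dimensional face to propagate the vanishing of $\mu_x$ from $n$-simplices down to their faces.
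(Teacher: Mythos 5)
Your overall blueprint is sound and close in spirit to the paper: reduce to the real-valued valuation $\mu_x = h(\Phi_1\cdot,x)-h(\Phi_2\cdot,x)$, kill it on $\calP_0^n$ (you do this by invoking Ludwig's Lemma~2 as a black box, which is a legitimate shortcut; the paper instead reproves that part), and then attack polytopes not containing the origin by comparing $P$ with $P_o$. But you correctly identify, and then do not resolve, the central technical obstacle, and that unresolved step is precisely where the paper's proof spends most of its effort.

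The gap is this: in your inclusion--exclusion over $P_o = P\cup C_1\cup\cdots\cup C_m$, the terms $\mu_x(S_{i_1}\cap\cdots\cap S_{i_k})$ for $k\ge 2$ are lower-dimensional polytopes on $\partial P$ that do \emph{not} contain the origin and are not covered by any hypothesis (they are not in $\calP_0^n$, and they are not $(n-1)$-dimensional simplices avoiding hyperplanes through $0$). Your proposed fix --- ``use the valuation identity on pairs of $n$-simplices sharing a face'' --- does not directly work: for the identity $\mu(T_1)+\mu(T_2)=\mu(T_1\cup T_2)+\mu(T_1\cap T_2)$ to yield $\mu(T_1\cap T_2)=0$ you already need to know $\mu$ vanishes on $T_1$, $T_2$ and $T_1\cup T_2$, but if $T_1\cap T_2$ lies on $\partial P$ away from the origin, the natural simplices $T_1, T_2$ to choose do not contain the origin either, and you are running in a circle. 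What is actually needed (and what the paper does) is an induction on the ambient dimension $n$: first one shows $\mu$ vanishes on $(n-1)$-dimensional simplices with a vertex at the origin (dissect an $n$-simplex $T$ with vertex at origin by a hyperplane $H$ into $T^+, T^-$, both with vertex at origin, and use $\mu(T)+\mu(T\cap H)=\mu(T^+)+\mu(T^-)$), then similarly on $(n-2)$-dimensional simplices not in $(n-2)$-planes, and then the induction hypothesis takes care of everything inside a hyperplane through the origin; only after $\mu\equiv 0$ on \emph{all} polytopes of dimension $<n$ does one go to full-dimensional $P$. Moreover, the paper sidesteps the messy combinatorics of your triangulation by first treating $P$ with a single visible facet $F$ (where $\mu(P)+\mu(F_o)=\mu(P_o)+\mu(F)$ immediately gives $\mu(P)=0$) and then decomposing a general $P$ into the pieces $C_i = P\cap\bigcup_{t\ge 0}tF_i$, whose pairwise intersections are already known to be null since they are lower-dimensional. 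So your ``main technical obstacle'' is not a footnote --- it is the proof, and as written the proposal does not supply it.
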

\begin{proof}
	Let $\mu:\calP^n\rightarrow\RR$ be a real valued valuation such that $\mu(\{0\})=0$, $\mu(T)=0$ for every $n$-dimensional simplex $T$ having one vertex at the origin, and $\mu(S)=0$ for every $(n-1)$-dimensional simplex $S$ not contained in a hyperplane through the origin. We shall show by induction on $n$ that $\mu=0$ on $\calP^n$. For $n=1$ the claim is obvious. Suppose now $n>1$ and that the statement is true for $n-1$. Let $T'$ be an $(n-1)$-dimensional simplex having one vertex at the origin. There exist an $n$-dimensional simplex $T$ and a hyperplane $H$  such that $H$ dissects $T$ into two simplices $T^+$ and $T^-$, each having one vertex at the origin, and $H\cap T=T'$. Since $\mu$ is a valuation, it follows that 
$$\mu(T)+\mu(T')=\mu(T^+)+\mu(T^-).$$
From $\mu(T)=\mu(T^+)=\mu(T^-)=0$ we conclude that $\mu(T')=0$. Hence $\mu$ vanishes on $(n-1)$-dimensional simplices having one vertex at the origin. Similarly it can be shown that $\mu$ vanishes on $(n-2)$-dimensional simplices which are not contained in $(n-2)$-dimensional subspaces. By the induction hypothesis, $\mu(P)=0$ for every polytope $P$ contained in a hyperplane through the origin. 

Suppose $P$ is an $(n-1)$-dimensional polytope not contained in a hyperplane through the origin. We write $P=S_1\cup\cdots\cup S_m$, where the $S_i$ 
are $(n-1)$-dimensional simplices not contained in hyperplanes through the origin such that $\dim S_i\cap S_j<n-1$ if $i\neq j$. Valuations on $\calP^n$ satisfy the inclusion-exclusion principle (cf. \cite{Klain:Rota}, p.\ 7) and, therefore, 
$$\mu(P)=\sum_I (-1)^{|I|-1}\mu(S_I), $$
where the sum is taken over all ordered $k$-tuples $I =
(i_1,\ldots,i_k)$ such that $1 \leq i_1 < \ldots < i_k \leq n$
and $k = 1, \ldots ,m$. Here $|I|=k$ denotes the length of $I$
and $S_I = S_{i_1} \cap \cdots \cap S_{i_k}$. Since $\mu$ vanishes on polytopes of dimension strictly less than $n-1$, we obtain
$$\mu(P)=\sum_{i=1}^m \mu(S_i)=0.$$ 
Thus $\mu$ vanishes on all polytopes of dimension strictly less than $n$. 

Let $P$ be an $n$-dimensional polytope containing the origin. We can dissect $P=S_1\cup\cdots\cup S_m$ into simplices $S_i$, $i=1,\ldots,m$, having one vertex at origin such that $\dim S_i\cap S_j<n$ if $i\neq j$. As before, using the inclusion-exclusion principle, we obtain
$$\mu(P)=\sum_I (-1)^{|I|-1}\mu(S_I)=\sum_{i=1}^m \mu(S_i)=0.$$

Suppose now that $P$ is an $n$-dimensional polytope not containing the origin. Recall that a face $F$ of $P$ is called visible if $\{\lambda x: 0\leq\lambda<1\}\cap
P=\emptyset$ for every $x\in F$. Suppose that $P$ has exactly one
visible facet $F$. Since $\mu$ is a valuation, we have
$$\mu(P)+\mu( F_o)= \mu(P_o)+ \mu (F).$$
Since $\mu$ vanishes on $(n-1)$-dimensional polytopes and polytopes containing the origin we conclude that $\mu(P)=0$. 

Let $P$ be an $n$-dimensional polytope not containing the origin and having $m>1$ visible facets $F_1,\ldots,F_m$. For $i=1,\ldots, m$, put
$$C_i= P\cap \bigcup_{t\geq 0} tF_i.$$
It is easy to see that $C_i$ is an $n$-dimensional polytope having exactly one visible facet, $P=C_1\cup\cdots\cup C_m$ and $\dim C_i\cap C_j<n$ for $i\neq j$. An application of the inclusion-exclusion principle shows that 
$$\mu(P)=\sum_{i=1}^m \mu(C_i)=0.$$
Thus, $\mu=0$. 

For each $x\in\RR^n$, 
$$\mu_x(P)=h(\Phi_1P,x)-h(\Phi_2P,x)$$
defines a real valued valuation and from the first part of the proof we know that $\mu_x=0$. Since a convex body is uniquely determined by its support function, we obtain $\Phi_1 P=\Phi_2 P$ for each $P\in\calP^n$.

\end{proof}

Our next result reduces the proof of Theorem \ref{thm:2} to Minkowski valuations which are homogeneous of degree $1$ or $n+1$.

\begin{proposition}
Suppose $\Phi:\mathcal{K}^n\rightarrow\mathcal{K}^n$ is an $SL(n)$ equivariant continuous Minkowski valuation which is 
homogeneous of degree $r\neq 1, n+1$. Then 
$$\Phi K=\{0\}$$
for every $K\in\mathcal{K}^n$.
\end{proposition}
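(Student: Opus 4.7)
The plan is to apply Lemma~\ref{lem:eq} to the pair $(\Phi,\mathbf{0})$ on $\calP^n$, verifying its three hypotheses, and then to promote $\Phi\equiv\{0\}$ from polytopes to $\calK^n$ by continuity.

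Since $\Phi|_{\calK_0^n}$ is itself an $SL(n)$ equivariant, homogeneous, continuous Minkowski valuation, Theorem~\ref{thm:3} forces it to fall into one of the two families of degrees $1$ and $n+1$; as $r\notin\{1,n+1\}$, the constants must all vanish and $\Phi\equiv\{0\}$ on $\calK_0^n$. This handles two hypotheses of Lemma~\ref{lem:eq}: $\Phi\{0\}=\{0\}$ (also immediate from Lemma~\ref{lem:1}(ii) since $r\neq1$) and $\Phi T=\{0\}$ for every $n$-simplex $T$ with a vertex at the origin.

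The substantive task is $\Phi S=\{0\}$ for an $(n-1)$-simplex $S$ with $\aff S$ missing the origin. By $SL(n)$ equivariance I may take $S=\conv\{e_1,\ldots,e_n\}\subset\{\sum x_i=1\}$. For each $\lambda>1$ I form $T_\lambda=\conv\{0,\lambda e_1,e_2,\ldots,e_n\}\in\calK_0^n$; the hyperplane $\{\sum x_i=1\}$ dissects $T_\lambda$ into $T_1=\conv\{0,e_1,\ldots,e_n\}$ and $T^+_\lambda=\conv\{e_1,\ldots,e_n,\lambda e_1\}$, meeting along $S$, so the valuation identity together with the vanishing of $\Phi$ on $T_1$ and $T_\lambda$ gives
\[
\Phi S=\Phi T^+_\lambda \qquad (\lambda>1).
\]
I would then exploit this identity by squeezing $T^+_\lambda$ under one-parameter families $\phi_\lambda\in GL(n)^+$ onto bodies where $\Phi$ is known to vanish (either bodies in $\calK_0^n$ or bodies of lower-dimensional span), reading off constraints via~(\ref{eq:1}). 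Taking for instance $\phi_\lambda=\operatorname{diag}\bigl(1/(\lambda-1),1,\ldots,1\bigr)$ yields $\phi_\lambda T^+_\lambda\to T_1$ as $\lambda\to\infty$, so by continuity and the first step,
\[
(\lambda-1)^{-(r-1)/n}\phi_\lambda\Phi S=\Phi(\phi_\lambda T^+_\lambda)\longrightarrow\{0\}.
\]
When $r<1$ the prefactor diverges, while $\phi_\lambda\Phi S$ tends to the projection of $\Phi S$ onto $\{x_1=0\}$, forcing this projection to be trivial and thus $\Phi S\subset\RR e_1$. Cycling through each coordinate via the alternating subgroup that stabilizes $S$ in $SL(n)$ then yields $\Phi S=\{0\}$.

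The main obstacle is the regime $r>1$, in which every natural scaling has prefactor $(\det\phi)^{(r-1)/n}\to0$ and a single squeeze limit furnishes no constraint, since no $GL(n)^+$ family can satisfy $\det\phi_\lambda\to\infty$ while keeping $\phi_\lambda T^+_\lambda$ bounded (by the volume identity $\vol_n(\phi K)=\det(\phi)\vol_n(K)$). To handle this case I would combine the dissection above with further valuation identities obtained by cutting $T^+_\lambda$ by additional hyperplanes and by comparing limits taken in opposing scaling regimes; the exclusion $r\neq n+1$ is precisely the critical degree at which these combined constraints would otherwise be insufficient, mirroring the second exceptional degree appearing in Theorem~\ref{thm:3}. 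Once $\Phi$ is seen to vanish on all three distinguished classes, Lemma~\ref{lem:eq} gives $\Phi\equiv0$ on $\calP^n$, and continuity of $\Phi$ together with the density of polytopes in $\calK^n$ completes the proof.
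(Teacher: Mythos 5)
Your overall scaffolding is the same as the paper's: use Theorem~\ref{thm:3} to get $\Phi\equiv\{0\}$ on $\calK_0^n$ (hence on $\{0\}$ and on $n$-simplices with a vertex at the origin), establish $\Phi S=\{0\}$ for $(n-1)$-simplices $S$ whose affine hull misses the origin, then invoke Lemma~\ref{lem:eq} and continuity. The first and last steps are fine. The problem is the middle step, and you candidly concede it: your squeeze argument genuinely only handles $r<1$. For $r>1$ the determinant factor $(\lambda-1)^{-(r-1)/n}$ tends to $0$, the limit relation degenerates to $\{0\}=\{0\}$, and the sentence beginning ``To handle this case I would combine the dissection above with further valuation identities\ldots'' is a plan, not a proof. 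There is no argument presented there, and it is not at all clear the plan would succeed; your heuristic that the ``second exceptional degree'' should fall out of a comparison of scaling regimes is not substantiated.

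The paper avoids this dichotomy entirely by a different dissection: instead of cutting a cone $T_\lambda$ over $S$ and comparing limits, it cuts $S$ itself by a hyperplane $H_\lambda$ through the origin into $\phi_\lambda S$ and $\psi_\lambda S$. Since $r\neq 1$, Lemma~\ref{lem:1}(ii) gives $\Phi(S\cap H_\lambda)=\{0\}$, so with $q=(r-1)/n$ one obtains the exact functional identity
$$\Phi S=\lambda^q \phi_\lambda\Phi S+(1-\lambda)^q \psi_\lambda\Phi S\qquad(0<\lambda<1).$$
Evaluating support functions at a coordinate direction $\pm e_k$ with $k\neq i,j$ (possible since $n\geq3$), where $\phi_\lambda^t e_k=\psi_\lambda^t e_k=e_k$, yields $h(\Phi S,\pm e_k)=(\lambda^q+(1-\lambda)^q)\,h(\Phi S,\pm e_k)$ for all $\lambda$. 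Since $q\neq 1$ precisely because $r\neq n+1$, the factor is not identically $1$, forcing $h(\Phi S,\pm e_k)=0$ for every $k$ and hence $\Phi S=\{0\}$. This treats all $r\neq 1,n+1$ uniformly, with no case split and no limiting process. To repair your proof you should replace the squeeze-to-the-boundary argument by this hyperplane dissection of $S$; as written, the $r>1$ branch is a gap.
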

\begin{proof}
Let $S$ be the $(n-1)$-dimensional simplex with vertices $\{e_1,\ldots,e_n\}$. 
For $0<\lambda<1$ and integers $1\leq i<j\leq n$, we denote by 
$H_\lambda=H_\lambda(i,j)$ the hyperplane through the origin with normal vector $\lambda e_i - (1-\lambda) e_j$. Furthermore define linear 
maps $\phi_\lambda=\phi_\lambda(i,j)$ and $\psi_\lambda=\psi_\lambda(i,j)$ by
$$\phi_\lambda e_i= \lambda e_i +(1-\lambda)e_j, \ \ \phi_\lambda e_k= e_k\ \text{ for } k\neq i,$$
$$\psi_\lambda e_j= \lambda e_i +(1-\lambda)e_j, \ \ \phi_\lambda e_k= e_k\ \text{ for } k\neq j.$$
Observe that the hyperplane $H_\lambda$ splits the simplex $S$ into two simplices $\phi_\lambda S$ and $\psi_\lambda S$. Since $\Phi$ is a Minkowski valuation, we obtain 
$$\Phi S+\Phi (S\cap H_\lambda)=\Phi(\phi_\lambda S)+\Phi(\psi_\lambda S).$$
Using the fact that $\Phi$ is $SL(n)$ equivariant and homogeneous of degree $r\neq 1$, Lemma \ref{lem:1} yields $\Phi(S\cap H_\lambda)=\{0\}$.
Together with (\ref{eq:1}), we obtain
\begin{equation}\label{eq:3}
 \Phi S=\lambda^q \phi_\lambda\Phi S+(1-\lambda)^q \psi_\lambda\Phi S,
\end{equation}
where $q=(r-1)/n$.

Now let $1\leq k \leq n$ and choose $1 \leq i < j \leq n$ such
that $k \neq i,j$. This is possible since $n \geq 3$. By
(\ref{supptrans}) and (\ref{eq:3}), we have
$$h(\Phi S,e_k)=\lambda^q h(\Phi S,e_k)+(1-\lambda)^q  h(\Phi
S,e_k)$$
for every $0 < \lambda < 1$. Since $\Phi$ is
homogeneous of degree $r \neq n+1$, we have $q \neq 1$, which
implies $h(\Phi S,e_k)=0$. Similarly, we obtain $h(\Phi
S,-e_k)=0$. Since this holds for every $1\leq k \leq n$, we must
have 
\begin{equation}\label{eq:null}
 	 \Phi S=\{0\}.
 \end{equation}

Since $\Phi$ is $SL(n)$ equivariant, we deduce from (\ref{eq:null}) that $\Phi$ vanishes on every $(n-1)$-dimensional simplex not contained in a hyperplane through the origin. From Theorem \ref{thm:3} we know in particular that $\Phi$ vanishes on $n$-dimensional simplices having one vertex at the origin. Furthermore we have $\Phi\{0\}=\{0\}$ by the $SL(n)$ equivariance of $\Phi$. Applying Lemma \ref{lem:eq} shows that $\Phi P=\{0\}$
for every polytope $P$. By continuity of $\Phi$, we conclude that $\Phi K=\{0\}$ for every $K\in\calK^n$. 
\end{proof}

\section{Proof of the main result in the case $r=1$}

In this section we shall prove Theorem \ref{thm:2} in the case that $\Phi$ is homogeneous of degree $r=1$. In fact, we prove 
a slightly stronger result, since every $GL(n)$ equivariant Minkowski valuations is in particular $SL(n)$ equivariant and homogeneous. 

As the next lemma shows, we can reduce the dimension of the problem drastically and, as a consequence, give a simple and direct proof of Theorem \ref{thm:2} under the assumption that the degree of homogeneity is $1$.

Denote by $\pi_x$, $x\in\RR^n\setminus\{0\}$, the orthogonal projection onto $\spn\{x\}$. Note that an $SL(n)$ equivariant continuous Minkowski valuation which is homogeneous of degree $r=1$ is already determined by its values on convex bodies contained in lines through the orgin, 
\begin{equation}\label{eq:5}
 h(\Phi K,u)=h(\Phi(\pi_u K),u),\qquad u\in S^{n-1}.
\end{equation}
Indeed, for $s>0$ set
$$\phi_s=\begin{pmatrix}
   s I_{n-1} &0\\ 0 & 1
  \end{pmatrix}, $$
where, as before, $I_{n-1}$ is the $(n-1)\times(n-1)$ identity matrix. Since  $\det \phi_s>0$, using (\ref{eq:1}) gives
$$\Phi(\phi_s K)=\phi_s\Phi K.$$
Hence, by the continuity of $\Phi$, letting $s\rightarrow 0$ we obtain
$$\Phi(\pi_{e_n} K)=\pi_{e_n}(\Phi K).$$
Since $h(\pi_{e_n}L,e_n)=h(L,e_n)$ for every $L\in\calK^n$, we arrive at
$$h(\Phi K,e_n)=h(\Phi(\pi_{e_n}K),e_n).$$
Now let $u\in S^{n-1}$ and choose $\rho\in SO(n)$ such that $u=\rho e_n$. Since $\Phi$ is $SL(n)$ equivariant, using (\ref{supptrans})
and $\pi_{e_n}\circ \rho^{-1}=\rho^{-1}\circ \pi_u$, we see that
$$ h(\Phi K,u)=h(\pi_{e_n}\Phi(\rho^{-1} K), e_n)=h(\Phi(\rho^{-1}\pi_u K), e_n)=h(\Phi(\pi_u K),u)$$
for every $u\in S^{n-1}$.

For $x,y\in\RR^n$, we denote by $[x,y]$ the convex hull of $x$ and $y$.

\begin{lemma}\label{lem:2}
Let $\Phi_1,\Phi_2:\mathcal{K}^n\rightarrow\mathcal{K}^n$ be $SL(n)$ equivariant continuous Minkowski valuations which are homogeneous of degree $r=1$.
If there exists $x\in\RR^n$, $x\neq 0$, such that
\begin{equation}\label{eq:4}\Phi_1\{x\}=\Phi_2\{x\}\qquad \text{and}\qquad \Phi_1[0,x]=\Phi_2[0,x],\end{equation}
then 
$$\Phi_1=\Phi_2.$$
\end{lemma}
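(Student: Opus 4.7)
The plan is to use formula (\ref{eq:5}) to reduce the assertion $\Phi_1 = \Phi_2$ to the weaker claim $\Phi_1 J = \Phi_2 J$ for every convex body $J$ contained in a line through the origin. Such a $J$ is either a singleton $\{y\}$ or a segment $[\alpha v, \beta v]$ with $v \in S^{n-1}$ and $\alpha < \beta$. So the goal breaks into two stages: (a) settle the ``basic'' objects, namely all singletons $\{y\}$ and segments $[0, y]$ emanating from the origin; (b) build arbitrary one-dimensional segments out of these via the valuation property.

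For stage (a), I would first use rotational equivariance (since $SO(n) \subset SL(n)$) to transport the hypothesis from $\{x\}$ and $[0,x]$ to $\{y\}$ and $[0,y]$ for every $y$ with $|y|=|x|$. To change the length, I would introduce, after rotating $x$ onto the $e_n$-axis, the diagonal map
$$\phi_\lambda = \operatorname{diag}\bigl(\lambda^{-1/(n-1)},\ldots,\lambda^{-1/(n-1)},\lambda\bigr) \in SL(n),\qquad \lambda>0,$$
which preserves $\operatorname{span}\{x\}$ and sends $\{x\}\mapsto \{\lambda x\}$, $[0,x] \mapsto [0,\lambda x]$. By (\ref{eq1}) the two valuations then agree on $\{\lambda y\}$ and $[0,\lambda y]$ for every $\lambda>0$ and every $y\ne 0$, and a $180^\circ$ rotation in a plane containing $y$ (which lies in $SO(n)\subset SL(n)$) extends this to all nonzero vectors of $\operatorname{span}\{y\}$. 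For the degenerate case $\{0\}$, the $SL(n)$-equivariance $\phi \Phi_i\{0\}=\Phi_i\{0\}$ for every $\phi\in SL(n)$, combined with boundedness of $\Phi_i\{0\}$, forces $\Phi_i\{0\}=\{0\}$.

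For stage (b), let $J = [\alpha v, \beta v]$ with $\alpha<\beta$. If $\alpha\le 0\le \beta$, set $J_1 = [\alpha v, 0]$, $J_2=[0,\beta v]$; then $J_1\cup J_2 = J$, $J_1\cap J_2 = \{0\}$, and the valuation identity together with $\Phi_i\{0\}=\{0\}$ gives $\Phi_i J = \Phi_i J_1 + \Phi_i J_2$, whose right-hand side is already determined by stage (a). If instead $0<\alpha<\beta$, apply the valuation property to $J_1 = [0,\alpha v]$ and $J_2 = [\alpha v,\beta v]=J$, whose union is $[0,\beta v]$ and intersection is $\{\alpha v\}$. At the level of support functions this reads
$$h(\Phi_i J, u) = h(\Phi_i[0,\beta v], u) + h(\Phi_i\{\alpha v\}, u) - h(\Phi_i[0,\alpha v], u)$$
for all $u\in\RR^n$, and each term on the right agrees for $i=1,2$ by stage (a); hence $\Phi_1 J = \Phi_2 J$. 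The case $\alpha<\beta\le 0$ is handled identically after a rotation taking $v$ to $-v$.

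The only genuinely delicate point is ensuring that all the scaling and sign-flipping transformations are available \emph{inside} $SL(n)$, so that the $SL(n)$-equivariance (\ref{eq1}) applies without the homogeneity factor from (\ref{eq:1}) interfering (this is why the explicit unimodular $\phi_\lambda$ above and the use of planar rotations rather than $-I_n$ matter for all $n\ge 3$). The other mild subtlety is that Minkowski addition has no inverses, which is why in case $0<\alpha<\beta$ the identification of $\Phi_i J$ must be carried out one support function at a time, where ordinary subtraction is legitimate.
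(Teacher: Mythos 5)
Correct, and essentially the same approach as the paper: transport the hypothesis to all nonzero $x$ by $SL(n)$-equivariance, reduce via (\ref{eq:5}) to segments on lines through the origin, and recover general segments from the valuation identity applied to $[0,\alpha v]$ and $[\alpha v,\beta v]$. The paper compresses the last step into the single identity $a\Phi_1[0,u]+\Phi_1[au,bu]=a\Phi_1\{u\}+b\Phi_1[0,u]$ for $0\le a\le b$ and invokes cancellation for Minkowski addition, whereas you split cases at the origin and argue via support functions, but the content is identical.
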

\begin{proof}
If (\ref{eq:4}) holds for one $x\neq 0$, then by $SL(n)$ equivariance (\ref{eq:4}) holds for every $x\in\RR^n\setminus\{0\}$. In order to prove $\Phi_1=\Phi_2$, by (\ref{eq:5}) it is sufficient to show that 
\begin{equation}\label{eq:6}
 \Phi_1[a u,bu]=\Phi_2[au,bu]
\end{equation}
for every $u\in S^{n-1}$ and $0\leq a\leq b$. To this end,
observe that by homogeneity and additivity 
$$ a \Phi_1[0,u]+\Phi_1[au,bu]=a\Phi_1\{u\} +b \Phi_1[0,u].$$
Hence, 
$$a\Phi_1[0,u]+\Phi_1[au,bu]=a\Phi_2[0,u]+\Phi_2[au,bu],$$
which implies (\ref{eq:6}) and proves the lemma.
\end{proof}

\begin{theorem}
A map $\Phi:\calK^n\rightarrow\calK^n$ is an $SL(n)$ equivariant continuous Minkowski valuation which is homogeneous of degree $1$ if and only if there are constants $a_1, a_2, a_3, a_4\geq0$ such that
$$\Phi K=a_1K+a_2(-K)+a_3K_o+a_4(-K_o)$$
for every $K\in\calK^n$.
\end{theorem}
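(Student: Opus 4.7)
The plan is to leverage Lemma~\ref{lem:2} to reduce the entire problem to the values of $\Phi$ on the two particular bodies $\{e_1\}$ and $[0,e_1]$. The (if) direction is routine: each of the maps $K\mapsto K$, $-K$, $K_o$, $-K_o$ is a continuous $SL(n)$ equivariant Minkowski valuation homogeneous of degree~$1$, and the property is preserved under Minkowski sums with non-negative coefficients.

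For the (only if) direction, I would first use Lemma~\ref{lem:1}(i) to write $\Phi\{e_1\}=[\alpha e_1,\beta e_1]$ and $\Phi[0,e_1]=[\gamma e_1,\delta e_1]$ with $\alpha\leq\beta$ and $\gamma\leq\delta$, and then apply Theorem~\ref{thm:3} to the restriction of $\Phi$ to $\calK_0^n$ to produce non-negative constants $c_1,c_2$ with $\Phi K=c_1K+c_2(-K)$ there. Evaluation at $K=[0,e_1]$ identifies $c_1=\delta$ and $c_2=-\gamma$, so in particular $\gamma\leq 0\leq\delta$. A case analysis on the signs of $a\leq b$, using the valuation identity on collinear segments (for instance the split $[0,be_1]=[0,ae_1]\cup[ae_1,be_1]$ meeting in $\{ae_1\}$) together with equivariance under the $SL(n)$ involution that negates $e_1$ and $e_2$ while fixing the remaining basis vectors, will then give closed-form expressions for $\Phi[ae_1,be_1]$ depending only on $\alpha$, $\beta$, $\gamma$, $\delta$.

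The main obstacle is to establish the two additional inequalities $\alpha\leq\gamma+\delta\leq\beta$, which together with $\gamma\leq 0\leq\delta$ are exactly what is needed for the candidate constants
\[a_1=\alpha-\gamma,\quad a_2=\delta-\beta,\quad a_3=\delta+\gamma-\alpha,\quad a_4=\beta-\gamma-\delta\]
to be non-negative. For this I would probe $\Phi$ on the two-dimensional test body $K=[e_1,e_1+e_2]$. By the projection identity~(\ref{eq:5}), $\pi_u K$ is a segment in $\spn\{u\}$ whose endpoints are linear in $u_1=\langle u,e_1\rangle$ and $u_2=\langle u,e_2\rangle$; using $SL(n)$ equivariance to rotate this segment onto the $e_1$-axis and substituting into the one-dimensional formulas from the previous step yields $h(\Phi K,u)$ in closed form. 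Restricting attention to the slice $u=u_1e_1+e_2$ then exhibits $h(\Phi K,u)$ as a piecewise linear function of $u_1$ with three successive pieces whose slopes from left to right are $\alpha$, $\gamma+\delta$, and $\beta$; since every support function must be convex, the slopes are forced to be non-decreasing, which is precisely the required pair of inequalities.

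With all four constants non-negative, a direct computation confirms that $\Phi'K:=a_1K+a_2(-K)+a_3K_o+a_4(-K_o)$ satisfies $\Phi'\{e_1\}=\Phi\{e_1\}$ and $\Phi'[0,e_1]=\Phi[0,e_1]$. Since $\Phi'$ is a continuous $SL(n)$ equivariant Minkowski valuation of degree~$1$ by the (if) direction, Lemma~\ref{lem:2} yields $\Phi=\Phi'$ on all of $\calK^n$.
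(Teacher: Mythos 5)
Your overall strategy mirrors the paper's: pin down the four support-function values $\alpha=-h(\Phi\{e_1\},-e_1)$, $\beta=h(\Phi\{e_1\},e_1)$, $\gamma=-h(\Phi[0,e_1],-e_1)$, $\delta=h(\Phi[0,e_1],e_1)$, show that the forced coefficients $a_1=\alpha-\gamma$, $a_2=\delta-\beta$, $a_3=\gamma+\delta-\alpha$, $a_4=\beta-\gamma-\delta$ are nonnegative, and close with Lemma~\ref{lem:2}; in the paper's notation these are $z_3=-\alpha$, $z_4=\beta$, $z_1=-\gamma$, $z_2=\delta$. Your derivation of $\gamma\leq 0\leq\delta$ from Theorem~\ref{thm:3}, and of $\alpha\leq\gamma+\delta\leq\beta$ from the slopes of $u_1\mapsto h(\Phi[e_1,e_1+e_2],\,u_1e_1+e_2)$, are both correct (I checked that the three slopes are indeed $\alpha$, $\gamma+\delta$, $\beta$ on $u_1<-1$, $-1<u_1<0$, $u_1>0$).

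The gap is in the claim that these inequalities make all four coefficients nonnegative. They give $a_3\geq0$ and $a_4\geq0$, but say nothing about $a_1=\alpha-\gamma$ or $a_2=\delta-\beta$: for instance $(\alpha,\beta,\gamma,\delta)=(-3/2,\,3/2,\,-1,\,1)$ satisfies $\gamma\leq0\leq\delta$, $\alpha\leq\gamma+\delta\leq\beta$, $\alpha\leq\beta$, $\gamma\leq\delta$, yet $a_1=a_2=-1/2$. In fact, going through all the ridges of the piecewise-linear function $h(\Phi[e_1,e_1+e_2],\cdot)$ on $\spn\{e_1,e_2\}$ shows that its convexity yields exactly $\alpha\leq\gamma+\delta\leq\beta$, $2\gamma\leq\alpha$ and $\beta\leq2\delta$, and nothing stronger; these are insufficient. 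The missing inequalities $\gamma\leq\alpha$ and $\beta\leq\delta$ (equivalently, the containment $\Phi\{e_1\}\subset\Phi[0,e_1]$) are obtained in the paper by probing a genuinely two-dimensional body: for $T=\conv(\{e_1,e_2,e_1+e_2\})$ one has $h(\Phi T,e_1+e_2)=\beta+\delta$ while $h(\Phi T,e_1)=h(\Phi T,e_2)=\delta$, so subadditivity gives $\beta\leq\delta$, and the same computation at $-e_1-e_2$ gives $\gamma\leq\alpha$. Your one-dimensional test segment cannot see these constraints, so you need to add a two-dimensional probe such as $T$ to complete the argument.
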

\begin{proof}
We define the numbers $z_1=h(\Phi[0,e_1],-e_1)$, $z_2=h(\Phi[0,e_1],e_1)$, $z_3=h(\Phi\{e_1\},-e_1)$ and 
$z_4=h(\Phi\{e_1\},e_1)$.  Let $S$ be the convex hull of $e_1$ and $e_2$. Using (\ref{eq:5}) we find that
\begin{equation}\label{eq:7}
  h(\Phi S, e_1+e_2)=z_4\qquad\text{and}\qquad h(\Phi S, -e_1+e_2)=z_1+z_2.
\end{equation}
In fact, since $\pi_x S=\frac{1}{2}\{x\}$, $x=e_1+e_2$, and $\pi_y S=\frac{1}{2}[-y,y]$, $y=-e_1+e_2$, we obtain 
$$h(\Phi S,x)=\frac{1}{2}h(\Phi\{x\},x)=h(\Phi\{e_1\},e_1)=z_2$$
and 
$$h(\Phi S,y)=\frac{1}{2}h(\Phi[0,y],y)+\frac{1}{2}h(\Phi[0,y],-y)=z_1+z_2.$$
Since support functions are subadditive, we have

$$2h(\Phi S,e_2)\leq h(\Phi S,e_1+e_2)+h(\Phi S,-e_1+e_1),$$
which together with (\ref{eq:7}) implies
\begin{equation} \label{eq:8}
	2z_2\leq z_4+z_1+z_2 .
\end{equation}
Similarly, 
$$ 2h(\Phi S,-e_1)\leq h(\Phi S, -e_1-e_2)+h(\Phi S,-e_1+e_2)$$
gives 
\begin{equation} 
2z_1\leq z_3+z_1+z_2.
\end{equation}

Let $T$ be the convex hull of $e_1$, $e_2$ and $e_1+e_2$. A simple calculation using (\ref{eq:5}) shows that $h(\Phi T,e_1+e_2)=h(\Phi[e_1, 2e_1],e_1)$. In fact, for $x=e_1+e_2$ we have $\pi_x T=\frac{1}{2}[x,2x]$ and therefore $h(\Phi T,x)=\frac{1}{2} h(\Phi[x,2x],x)=h(\Phi[e_1, 2e_1],e_1)$. Since $\Phi$ is a Minkowski valuation and homogeneous of degree $1$, we have
$$\Phi[0,e_1]+\Phi[e_1,2e_1]=\Phi\{e_1\}+\Phi [0,2e_2]=\Phi\{e_1\}+2\Phi[0,e_1],$$
hence,
$$\Phi[e_1,2e_1]=\Phi\{e_1\}+\Phi[0,e_1].$$
We deduce that $h(\Phi T,e_1+e_2)=z_2+z_4$. 
Using again the subadditivity of support functions, we have
$$h(\Phi T,e_1+e_2)\leq h(\Phi T,e_1)+h(\Phi T,e_2).$$
Thus,
\begin{equation}\label{eq:10}
 	z_4\leq z_2.
 \end{equation} 
Similar reasoning, using $h(\Phi T,-e_1-e_2)\leq h(\Phi T,-e_1)+h(\Phi T,-e_2)$, gives
\begin{equation}\label{eq:11}
 	z_3\leq z_1.
 \end{equation} 
We remark that the inequalities (\ref{eq:10}) and (\ref{eq:11}) imply that for any $SL(n)$ equivariant continuous Minkowski valuation 
$$\Phi\{x\}\subset\Phi[0,x].$$ 

We define numbers
\begin{align*}
a_1&=z_1-z_3,\\
a_2&=z_2-z_4,\\
a_3&=z_2+z_3-z_1,\\
a_4&=z_1+z_4-z_2,
\end{align*}
which are all nonnegative by (\ref{eq:8})-(\ref{eq:11}). Let $\Psi$ be the $SL(n)$ equivariant Minkowski valuation given
by
$$\Psi K=a_1K+a_2(-K)+a_3K_o+a_4(-K_o).$$
A simple computation shows that 
$$\Phi\{e_1\}=\Psi\{e_1\}\qquad\text{and}\qquad \Phi[0,e_1]=\Psi[0,e_1],$$
which by Lemma \ref{lem:2} implies $\Phi=\Psi$ and proves the theorem.
\end{proof}

\section{Proof of the main result in the case $r=n+1$}

In this section we complete the proof of Theorem \ref{thm:2}. In the following, unless otherwise specified, $\Phi:\calK^n\rightarrow\calK^n$ will always denote an $SL(n)$ equivariant continuous Minkowski valuation which is homogeneous of degree $r=n+1$. The following technical lemma can be found in \cite{Ludwig:Minkowski}*{Lemma 3}.
\begin{lemma} \label{lem:3}
Suppose $s\in\mathbb{R}$ and $f:\mathbb{R}^2\rightarrow\mathbb{R}$ is a function which is positively homogeneous of degree $r$ and which satisfies
$$f(x)=\lambda^s f(A^t_\lambda x)+(1-\lambda)^s f(B^t_\lambda x)\ \text{ for }\ 0<\lambda<1,\ x\in\RR^2,$$
where 
$$A_\lambda = \begin{pmatrix}	\lambda & 0 \\ 1-\lambda & 1\\ \end{pmatrix}\ \text{ and }\ B_\lambda = \begin{pmatrix}	1 & \lambda \\ 0 & 1-\lambda\\ \end{pmatrix}.$$
Then for $x_1> x_2\geq0$
$$f(x_1,x_2)=\frac{x_1^{s+r}-x_2^{s+r}}{(x_1-x_2)^{s}} f(1,0),$$
$$f(-x_1,-x_2)=\frac{x_1^{s+r}-x_2^{s+r}}{(x_1-x_2)^{s}} f(-1,0),$$
for $x_2> x_1\geq 0$
$$f(x_1,x_2)=\frac{x_2^{s+r}-x_1^{s+r}}{(x_2-x_1)^s}f(0,1),$$
$$f(-x_1,-x_2)=\frac{x_2^{s+r}-x_1^{s+r}}{(x_2-x_1)^s}f(0,-1),$$
and for $x_1,x_2\geq0$, $x_1+x_2>0$
$$f(-x_1,x_2)=\frac{x_2^{s+r}}{(x_1+x_2)^s}f(0,1)+\frac{x_1^{s+r}}{(x_1+x_2)^s}f(-1,0),$$
$$f(x_1,-x_2)=\frac{x_1^{s+r}}{(x_1+x_2)^s}f(1,0)+\frac{x_2^{s+r}}{(x_1+x_2)^s}f(0,-1).$$
\end{lemma}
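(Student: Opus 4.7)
The plan is to derive each of the six formulas by a single application of the functional equation combined with positive homogeneity. Writing out the equation with the explicit transposes yields
\begin{equation*}
f(x_1, x_2) = \lambda^s f(\lambda x_1 + (1-\lambda) x_2,\, x_2) + (1-\lambda)^s f(x_1,\, \lambda x_1 + (1-\lambda) x_2),
\end{equation*}
and for each formula I will plug in a particular $(x_1, x_2)$ and choose $\lambda$ so that one of the two arguments on the right lies on a coordinate axis, where $f$ is completely determined by homogeneity from a single value.

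For the first formula, substitute $(x_1, x_2) = (1, 0)$ and leave $\lambda \in (0, 1)$ arbitrary. The arguments on the right become $(\lambda, 0)$ and $(1, \lambda)$; positive homogeneity gives $f(\lambda, 0) = \lambda^r f(1, 0)$, and rearranging yields
\begin{equation*}
f(1, \lambda) = \frac{1 - \lambda^{s+r}}{(1-\lambda)^s}\, f(1, 0).
\end{equation*}
For general $x_1 > x_2 \geq 0$ with $x_1 > 0$, the identity $f(x_1, x_2) = x_1^r f(1, x_2/x_1)$ together with a routine algebraic simplification produces the claimed expression (with the case $x_2 = 0$ being immediate from homogeneity). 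The second, third, and fourth formulas are entirely analogous, obtained by the same recipe starting from $(-1, 0)$, $(0, 1)$, and $(0, -1)$, respectively.

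The two remaining formulas live in the second and fourth quadrants and admit an even shorter derivation. For $f(-x_1, x_2)$ with $x_1, x_2 \geq 0$ and $x_1 + x_2 > 0$, I would apply the functional equation directly at $(-x_1, x_2)$ with the specific choice $\lambda = x_2/(x_1 + x_2)$. This choice makes the mixed coordinate $-\lambda x_1 + (1-\lambda) x_2$ vanish, so the two arguments on the right collapse to $(0, x_2)$ and $(-x_1, 0)$; positive homogeneity then yields precisely the claimed linear combination of $f(0, 1)$ and $f(-1, 0)$. The sixth formula follows in the same way, starting from $(x_1, -x_2)$ and using the same $\lambda$. I do not expect any substantive obstacle here: once the correct substitution is identified each formula drops out in a single line, and the only real care required is in the algebraic simplification $x_1^r(1 - (x_2/x_1)^{s+r})/(1 - x_2/x_1)^s = (x_1^{s+r} - x_2^{s+r})/(x_1 - x_2)^s$ and in sign bookkeeping when crossing between quadrants.
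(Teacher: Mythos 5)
Your derivation is correct: a single application of the functional equation at a boundary vector $(\pm 1,0)$ or $(0,\pm 1)$, combined with positive homogeneity, does determine $f$ on the corresponding open sector, and the choice $\lambda = x_2/(x_1+x_2)$ cleanly collapses the mixed coordinate for the second and fourth quadrants. The paper itself does not reproduce a proof of this lemma — it cites Ludwig's Lemma~3 in \cite{Ludwig:Minkowski} verbatim — so there is nothing in-paper to compare against, but your argument is the standard direct one for this functional equation and I see no substantive issue. One small omission: in the last two formulas your $\lambda = x_2/(x_1+x_2)$ lies in $(0,1)$ only when both $x_1>0$ and $x_2>0$, so the cases $x_1=0$ or $x_2=0$ (with $x_1+x_2>0$) need the same one-line homogeneity remark you already supplied for $x_2=0$ in the first formula; this is trivial but should be stated to match the hypotheses $x_1,x_2\geq 0$, $x_1+x_2>0$.
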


Fix two integers $i$ and $j$, $1\leq i<j\leq n$. We define a function $f:\RR^2\rightarrow \RR$ by $f(x_1,x_2)=h(\Phi S,x_1e_i+x_2e_j)$, where $S$ is the $(n-1)$-dimensional simplex with vertices $\{e_1,\ldots,e_n\}$. It follows from (\ref{eq:3}) that $f$ satisfies the assumptions of Lemma \ref{lem:3}. Observe that from (\ref{invar}) we have $f(1,0)=f(0,1)$ and $f(-1,0)=f(0,-1)$. Thus, the equations obtained from Lemma \ref{lem:3} simplify to
\begin{align*}
f(x_1,x_2)&=(x_1+x_2) f(1,0),\\
f(-x_1,-x_2)&=(x_1+x_2)f(-1,0),
\end{align*} 
and
$$(x_1+x_2)f(-x_1,x_2)=x_2^{2}f(1,0)+x_1^2f(-1,0),$$
for $ x_1, x_2\geq 0$. In particular, $f(x_1,x_2)=f(x_2,x_1)$ for $x_1,x_2\in\RR$.

Similarly, we define $g:\RR^2\rightarrow \RR$ by $g(x_1,x_2)=h(c_1 m(T)+c_2 \rmM T,x_1e_i+x_2e_j)$, where $T$ is the $n$-dimensional simplex with vertices $\{0,e_1,\ldots,e_n\}$ and $c_1,c_2\in\RR$, $c_2\geq0$. Applying Lemma \ref{lem:3} and using (\ref{invar}), we get as before
\begin{align*}
g(x_1,x_2)&=(x_1+x_2) g(1,0),\\
g(-x_1,-x_2)&=(x_1+x_2)g(-1,0),
\end{align*} 
and
$$	(x_1+x_2)g(-x_1,x_2)=x_2^{2}g(1,0)+x_1^2g(-1,0),$$
for $ x_1, x_2\geq 0$. Since $m(T)$ is a point and $\rmM T=-\rmM T$, we see that
\begin{align*}
	g(1,0)&=c_1h(m(T),e_1)+c_2h(\rmM T,e_1),\\
	g(-1,0)&=-c_1h(m(T),e_1)+c_2h(\rmM T,e_1).
\end{align*}
By the subadditivity of support functions, $f(1,0)+f(-1,0)\geq 0$. Since  $h(\rmM T,e_1)\geq 0$, it is possible to choose
$c_1\in \RR$ and $c_2\geq0$ such that $g(1,0)=f(1,0)$ and $g(-1,0)=f(-1,0)$. We conclude that $f=g$. Notice that by (\ref{invar}) the constants $c_1$ and $c_2$ do not depend on the particular choice of $i$ and $j$.  On the level of support functions this means that
\begin{equation}\label{eq:13}
 	h(\Phi S,x_1e_i+x_2e_j)=h(c_1 m(T)+c_2 \rmM T,x_1e_i+x_2e_j)
 \end{equation}
for any choice of integers $1\leq i<j\leq n$ and $x_1,x_2\in\RR$.
In fact, even more is true. The next lemma (cf. \cite{Ludwig:Minkowski}*{Lemma 4}) shows that (\ref{eq:3}) and (\ref{eq:13}) imply 

\begin{equation}
 	 \Phi S= c_1m(T)+c_2 \rmM T.
\end{equation}
For any integers $1\leq i<j\leq n$ and $\lambda\in(0,1)$ we define linear maps $\phi_\lambda=\phi_\lambda(i,j)$ and $\psi_\lambda=\psi_\lambda(i,j)$ by
$$\phi_\lambda e_i= \lambda e_i +(1-\lambda)e_j, \ \ \phi_\lambda e_k= e_k\ \text{ for } k\neq i$$
and
$$\psi_\lambda e_j= \lambda e_i +(1-\lambda)e_j, \ \ \phi_\lambda e_k= e_k\ \text{ for } k\neq j.$$
\begin{lemma}
Let $q\in\mathbb{R}$ and $h:\mathbb{R}^n\rightarrow\mathbb{R}$, $n\geq 3$, be a function which satisfies
$$h(x)=\lambda^q h(\phi^{t}_\lambda x)+(1-\lambda)^q h(\psi^{t}_\lambda x)$$
for any $\ 0<\lambda<1$, integers $1\leq i<j\leq n$, and $x\in\mathbb{R}^n.$ If $h(x)=0$ for every $x\in\mathbb{R}^n$ where at most two coordinates are not zero, then $h=0$.
\end{lemma}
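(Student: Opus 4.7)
My plan is induction on the number $k$ of nonzero coordinates of $x \in \mathbb{R}^n$; the base case $k \leq 2$ is exactly the hypothesis. For $k \geq 3$ I split on the sign pattern of the nonzero entries. First, if some pair $i<j$ in the support of $x$ has $x_i,x_j$ of opposite signs, I take $\lambda_0 = x_j/(x_j-x_i) \in (0,1)$, which makes $\lambda_0 x_i + (1-\lambda_0)x_j = 0$. Both $\phi_{\lambda_0}^t x$ and $\psi_{\lambda_0}^t x$ then have exactly $k-1$ nonzero entries and vanish by the inductive hypothesis, so the functional equation forces $h(x)=0$.

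Next, suppose all nonzero entries of $x$ have the same sign, say positive, but are not all equal; pick $i,j$ in the support with $x_i<x_j$. For any $\lambda \in (0,(x_j-x_i)/x_j)$ set $y_i = (x_i-(1-\lambda)x_j)/\lambda$, which is negative, and $y_\ell = x_\ell$ otherwise. A direct computation gives $\phi_\lambda^t y = x$, while $y$ itself and $\psi_\lambda^t y$ both have $k$ nonzero entries of mixed signs (negative in slot $i$, positive in the rest of $\mathrm{supp}(x)$) and therefore vanish by the opposite-signs case just handled. The functional equation at $y$ then reads $0 = \lambda^q h(x) + 0$, yielding $h(x)=0$. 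The last subcase is when all $k$ nonzero entries of $x$ are equal. If $k<n$, pick some $i$ in the support and $\ell$ outside and apply the equation with the pair $(i,\ell)$: $\phi_\lambda^t x$ has $k$ nonzero entries not all equal, and $\psi_\lambda^t x$ has $k+1$ nonzero entries not all equal, so both fall under the previous subcase (at levels $k$ and $k+1$) and vanish. Hence $h(x)=0$. This forces me to settle the opposite-sign and same-sign-not-all-equal subcases for every $k$ first, and only then treat the all-equal subcase.

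The main obstacle is the all-equal configuration in full dimension $k=n$, where both $\phi_\lambda^t$ and $\psi_\lambda^t$ fix $x$ and the functional equation degenerates to $h(x) = (\lambda^q+(1-\lambda)^q)h(x)$. For $q\neq 1$ this still forces $h(x)=0$ by choosing $\lambda$ with $\lambda^q+(1-\lambda)^q \neq 1$; but in the paper's setting $q=1$ and the equation is vacuous. In that case I invoke continuity of $h$, which is available in the intended application since $h$ is the difference of two support functions, and obtain $h(x)=\lim_{\varepsilon\to 0}h(x_\varepsilon)=0$ by perturbing $x$ to a same-sign-not-all-equal point already covered.
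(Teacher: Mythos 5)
Your overall plan (induction on the number $k$ of nonzero coordinates, split by sign pattern, continuity for the degenerate diagonal) is a reasonable reconstruction of Ludwig's argument, and you are right that the lemma as printed has an implicit continuity hypothesis: for $q=1$ and no continuity the statement is actually false (take $h$ to be the indicator of the open positive diagonal ray, extended by zero; every $\phi_\lambda^t$ and $\psi_\lambda^t$ preserves this ray and its complement, so the functional equation holds and yet $h(1,\dots,1)\neq 0$). Spotting that is good. However, there is a genuine circularity in the ordering you propose.

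You announce that you will ``settle the opposite-sign and same-sign-not-all-equal subcases for every $k$ first, and only then treat the all-equal subcase.'' But the opposite-sign reduction at level $k$ zeroes out one coordinate and invokes vanishing of $h$ at level $k-1$ \emph{for both} transformed vectors. When $x$ has exactly one entry of one sign and the remaining $k-1$ entries of the opposite sign are all equal (e.g.\ $x=(-d,c,c,c,0,\dots,0)$ with $k\geq 4$ and $k-1<n$, possible as soon as $n\geq 4$), one of $\phi_{\lambda_0}^t x$, $\psi_{\lambda_0}^t x$ is all-equal at level $k-1\geq 3$. That is precisely the subcase you have postponed. And the chain closes up: all-equal at $k-1$ needs (via your $(i,\ell)$ trick) not-all-equal at $k$, which needs opposite-sign at $k$, which for these bad vectors needs all-equal at $k-1$ again. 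No choice of cross-sign pair $(i,j)$ avoids this, because the unique off-sign slot must be one of $i,j$, and zeroing it always leaves the all-equal block. Note also that your Case B can itself land on such bad mixed-sign vectors (e.g.\ $x=(1,2,1,1)$, $i=1$, $j=2$ gives $\psi_\lambda^t y=(y_1,1,1,1)$), so the problem is not isolated to Case A.

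The repair is simple and uses a tool you already allow yourself: drop the $(i,\ell)$-pair reduction for the all-equal case and use continuity at \emph{every} level, not only $k=n$. If $x=c\mathbf{1}_S$ with $|S|=k\geq 3$ and $c>0$, then $x_\varepsilon = x+\varepsilon e_i$ ($i\in S$, $\varepsilon$ small) is same-sign not-all-equal with exactly $k$ nonzero entries, and $h(x)=\lim_{\varepsilon\to 0}h(x_\varepsilon)=0$. With that, the induction on $k$ closes cleanly in a single pass: the inductive hypothesis ``$h=0$ on all vectors with $\leq k-1$ nonzero entries'' handles the opposite-sign case at $k$; the opposite-sign case at $k$ handles the not-all-equal case at $k$; and continuity from the not-all-equal case at $k$ handles the all-equal case at $k$. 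There is then no need to reach forward to level $k+1$ and no circularity. You should also record explicitly that continuity (or that $h$ is a difference of support functions) is an additional hypothesis of the lemma, since the statement is false without it when $q=1$.
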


We are now ready to complete the proof of Theorem \ref{thm:2} treating the only remaining case $r=n+1$.

\begin{theorem}
A map $\Phi:\calK^n\rightarrow\calK^n$ is an $SL(n)$ equivariant continuous Minkowski valuation which is homogeneous of degree $r=n+1$ if and only if there are constants $a_1,a_2\in\mathbb{R}$ and $a_3,a_4\geq 0$ such that  
$$ZK=a_1 m(K)+a_2 m_*(K)+ a_3\rmM K+ a_4 \rmM_* K$$
for every $K\in\mathcal{K}^n$.
\end{theorem}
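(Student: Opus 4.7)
The plan is to construct four constants $a_1, a_2 \in \RR$ and $a_3, a_4 \geq 0$ so that the candidate valuation
$$\Psi K := a_1 m(K) + a_2 m_*(K) + a_3 \rmM K + a_4 \rmM_* K$$
agrees with $\Phi$ on the three test classes appearing in Lemma \ref{lem:eq}, and then to invoke that lemma together with continuity.

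First, I would restrict $\Phi$ to $\calK_0^n$. The restriction is a continuous $SL(n)$ equivariant Minkowski valuation, homogeneous of degree $r = n+1 \neq 1$, so Theorem \ref{thm:3} supplies $a_1 \in \RR$ and $a_3 \geq 0$ with
$$\Phi K = a_1 m(K) + a_3 \rmM K, \qquad K \in \calK_0^n.$$
In particular $\Phi\{0\} = \{0\}$ (this also follows directly from the homogeneity of degree $n+1$), and the formula pins down $\Phi$ on every $n$-simplex with one vertex at the origin. Because $K_o = K$ throughout $\calK_0^n$, both $m_*$ and $\rmM_*$ vanish on this subclass, so the identity $\Phi K = \Psi K$ holds there regardless of how $a_2, a_4$ are later chosen.

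Second, the paragraphs immediately preceding the theorem have produced $c_1 \in \RR$ and $c_2 \geq 0$ with $\Phi S = c_1 m(T) + c_2 \rmM T$, where $S = \conv\{e_1,\ldots,e_n\}$ and $T = S_o$. I would set $a_2 := c_1$ and $a_4 := c_2$, and transfer this to an arbitrary $(n-1)$-simplex $S'$ whose affine hull misses the origin (equivalently, whose vertices are linearly independent). Such a simplex can be written as $S' = \phi S$ for some $\phi \in GL(n)$, and after possibly swapping two vertices one may assume $\det \phi > 0$. Combining formula (\ref{eq:1}) with $q = 1$, the change-of-variables identities $m_*(\phi K) = (\det \phi)\phi\, m_*(K)$ and $\rmM_*(\phi K) = (\det \phi)\phi\,\rmM_*(K)$ (valid because $(\phi K)_o = \phi K_o$ and $\det\phi > 0$), and the observations $m_*(S) = m(T)$, $\rmM_* S = \rmM T$, one obtains
$$\Phi S' = (\det \phi)\,\phi\, \Phi S = a_2 m_*(S') + a_4 \rmM_* S'.$$
Since $\dim S' = n-1$, one also has $m(S') = 0$ and $\rmM S' = \{0\}$, so $\Phi S' = \Psi S'$.

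At this point $\Phi$ and $\Psi$ are both continuous $SL(n)$ equivariant Minkowski valuations that coincide on $\{0\}$, on every $n$-simplex with one vertex at the origin, and on every $(n-1)$-simplex not contained in a hyperplane through the origin. Lemma \ref{lem:eq} then yields $\Phi P = \Psi P$ for every $P \in \calP^n$, and continuity together with the density of $\calP^n$ in $\calK^n$ completes the direct implication. The converse direction is immediate from the $GL(n)$-equivariance, continuity, and valuation property of $m, m_*, \rmM, \rmM_*$ already noted in the introduction. The main obstacle is the transfer in the second step: the sign condition $\det \phi > 0$ is essential because (\ref{eq:1}) is formulated only for orientation-preserving maps, and one must keep track simultaneously of how $m_*$ and $\rmM_*$ transform under $\phi$; once this bookkeeping is done, the argument collapses cleanly to an application of Lemma \ref{lem:eq}.
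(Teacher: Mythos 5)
Your proposal is correct and follows essentially the same route as the paper: restrict to $\calK^n_0$ to get $a_1,a_3$ from Theorem~\ref{thm:3}, take $a_2=c_1$, $a_4=c_2$ from the computation culminating in~\eqref{eq:13}, verify agreement with $\Psi$ on the three test classes, and invoke Lemma~\ref{lem:eq} plus continuity. The extra bookkeeping you do in the transfer step (using~\eqref{eq:1} with $q=1$ together with the $GL^+(n)$ covariance of $m_*$, $\rmM_*$ and the identities $m_*(S)=m(T)$, $\rmM_*S=\rmM T$) is exactly what the paper's one-line appeal to $SL(n)$ equivariance is implicitly relying on.
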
 
\begin{proof}
We know from Theorem \ref{thm:3} that there are constants $a_1\in\RR$ and $a_3\geq 0$ such that 
$$\Phi K=a_1 m(K) +a_3 \rmM K$$ 
for every $K\in\calK^n$ containing the origin. We define an $SL(n)$ equivariant continuous Minkowski valuation $\Psi$ by 
$$\Psi K=a_1 m(K)+a_2 m_*(K)+ a_3\rmM K+ a_4 \rmM_* K,$$
where $a_2=c_1$ and $a_4=c_2$ are constants from (\ref{eq:13}). An immediate consequence of this definition is that
$$\Phi T=\Psi T$$
for every $n$-dimensional simplex $T$ having one vertex at the origin and that 
\begin{equation}\label{eq:14}
	\Phi S=\Psi S,
\end{equation}
where $S$ is the $(n-1)$-dimensional simplex with vertices $\{e_1,\ldots, e_n\}$. Since both $\Phi$ and $\Psi$ are $SL(n)$ equivariant, we deduce that (\ref{eq:14}) holds true for every $(n-1)$-dimensional simplex not contained in a hyperplane through the origin. Therefore, Lemma \ref{lem:eq} yields $\Phi=\Psi$ and proves the theorem.

\end{proof}

\subsection*{Acknowledgment}{The work of the author was
supported by the  Austrian Science Fund (FWF), within
the project ``Minkowski valuations and geometric inequalities",
Project number: P\,22388-N13.}

\begin{bibdiv}
\begin{biblist}

\bib{Alesker99}{article}{
    title={Continuous rotation invariant valuations on convex sets},
    author={S. Alesker},
    journal={Ann. of Math (2)},
    volume={149},
    date={1999},
    pages={977--1005}
}

\bib{Alesker01}{article}{
    title={Description of translation invariant valuations on convex sets with solution of P. McMullen's conjecture},
    author={S. Alesker},
    journal={Funct. Anal.},
    volume={11},
    date={2001},
    pages={244--272}
}
\bib{bernig}{article}{
    title={Valuations with Crofton formula and Finsler geometry},
    author={A. Bernig},
    journal={Adv. Math.},
    volume={210},
    date={2007},
    pages={733--753}
}
\bib{bernig-broecker}{article}{
	title={Valuations on manifolds and Rumin cohomology},
	author={A. Bernig},
	author={L. Br\"ocker},
	journal={J. Differential Geom.},
	volume={75},
	date={2007},
	pages={433--457}
}
\bib{bernig-fu}{article}{
	title={Hermitian integral geometry},
	author={A. Bernig},
	author={J.H.G. Fu},
	journal={Ann of Math. (2)},
	status={in press}
}
\bib{berck10}{article}{
	title={Convexity of $L_p$-intersection bodies},
	author={G. Berck},
	journal={Adv. Math.},
	volume={222},
	date={2010},
	pages={920--936}
}
\bib{campi-gronchi}{article}{
	title={The $L^p$-Busemann-Petty centroid inequality},
	author={S. Campi},
	author={P. Gronchi},
	journal={Adv. Math.},
	volume={167},
	date={2002},
	pages={128--141}
}
\bib{fleury}{article}{
	title={A stability result for mean width of $L_p$-centroid bodies},
	author={B. Fleury},
	author={O. Guedon},
	author={G. Paouris},
	journal={Adv. Math.},
	volume={214},
	date={2007},
	pages={865--877}
}
\bib{fu}{article}{
	title={Structure of the unitary valuation algebra},
	author={J.H.G. Fu},
	journal={J. Differential Geom.},
	volume={72},
	date={2006},
	pages={509--533}
}
\bib{gardner-book}{book}{
	title={Geometric tomography},
	author={R.J. Gardner},
	publisher={Cambridge University Press},
	address={New York},
	date={2006},
	edition={2}
}

\bib{gardner-gianno}{article}{
    title={$p$-cross-section bodies},
    author={R.J. Gardner},author={A. Giannopoulos},
    journal={Indiana Univ. Math. J.},
    volume={48},
    date={1999},
    pages={593--613}
}

\bib{grinberg}{article}{
    title={Convolutions, transforms, and convex bodies},
    author={E. Grinberg},
    author={G. Zhang},
    journal={Proc. Lond. Math. Soc. (3)},
    volume={78},
    date={1999},
    pages={77--115}
}

\bib{haberl08b}{article}{
	title={$L_p$ intersection bodies},
	author={C. Haberl},
	journal={Adv. Math.},
	volume={217},
	date={2008},
	pages={2599--2624}
}

\bib{haberl08}{article}{
	title={Star body valued valuations},
	author={C. Haberl},
	journal={Indiana Univ. Math. J.},
	volume={58},
	date={2009},
	pages={2253--2276}
}

\bib{haberl09}{article}{
	title={Blaschke valuations},
	author={C. Haberl},
	journal={Amer. J. Math.},
	status={in press}
}

\bib{hab-lud}{article}{
	title={A characterization of $L_p$ intersection bodies},
	author={C. Haberl},
	author={M. Ludwig},
	journal={Int. Math. Res. Not.},
	date={2006},
	pages={29 pages},
	number={10548}
}
\bib{habschu09}{article}{
	title={General $L_p$ affine isoperimetric inequalities},
	author={C. Haberl},
	author={F.E. Schuster},
	journal={J. Differential Geom.},
	volume={83},
	date={2009},
	pages={1--26}
}
\bib{H-Sc-2}{article}{
	title={Asymmetric affine $L_p$ Sobolev inequalities},author={C. Haberl},
	author={F.E. Schuster},
	journal={J. Funct. Anal.},
	volume={257},
	date={2009},
	pages={641--658}
}
\bib{kiderlen05}{article}{
	title={Blascke- and Minkowski-endomorphisms of convex bodies},
	author={M. Kiderlen},
	journal={Trans. Amer. Math. Soc.},
	volume={358},
	date={2006},
	pages={5539--5564}
}
\bib{Klain:Rota}{book}{
	title={Introduction to Geometric Probability},
	author={D.A. Klain},author={G.-C. Rota},
	publisher={Cambridge University Press},
	address={Cambridge},
	date={1997}
}
\bib{ludwig02}{article}{
	title={Projection bodies and valuations},
	author={M. Ludwig},
	journal={Adv. Math.},
	volume={172},
	date={2002},
	pages={158--168}
}
\bib{Ludwig:matrix}{article}{
	title={Ellipsoids and matrix valued valuations},
	author={M. Ludwig},
	journal={Duke Math. J.},
	volume={119},
	date={2003},
	pages={159--188}
}
\bib{Ludwig:Minkowski}{article}{
	title={Minkowski valuations},
	author={M. Ludwig},
	journal={Trans. Amer. Math. Soc.},
	volume={357},
	date={2005},
	pages={4191--4213}
}
\bib{Ludwig06}{article}{
	title={Intersection bodies and valuations},
	author={M. Ludwig},
	journal={Amer. J. Math.},
	volume={128},
	date={2006},
	pages={1409--1428}
}
\bib{ludwig-reitzner}{article}{
	title={A classification of $\mathrm{SL}(n)$ invariant valuations},
	author={M. Ludwig},author={M. Reitzner},
	journal={Ann. of Math. (2)},
	status={in press}
}
\bib{lutwak1}{article}{
	title={On some affine isoperimetric inequalities},
	author={E. Lutwak},
	journal={J. Differential Geom.},
	volume={23},
	date={1986},
	pages={1--13}
}
\bib{lutwak2}{article}{
	title={Centroid bodies and dual mixed volumes},
	author={E. Lutwak},
	journal={Proc. Lond. Math. Soc.},
	volume={60},
	date={1990},
	pages={365--391}
}
\bib{lz}{article}{
	title={Blaschke-Santal\'{o} inequalities},
	author={E. Lutwak},author={G. Zhang},
	journal={J. Differential Geom.},
	volume={47},
	date={1997},
	pages={1--16}
}
\bib{lyz}{article}{
	title={$L_p$ affine isoperimetric inequalities},
	author={E. Lutwak},author={D. Yang},author={G. Zhang},
	journal={J. Differential Geom.},
	volume={56},
	date={2000},
	pages={111--132}
}
\bib{lyz00duke}{article}{
	title={A new ellipsoid associated with convex bodies},author={E. Lutwak},author={D. Yang},author={G. Zhang},
	journal={Duke Math. J.},
	volume={104},
	date={2000},
	pages={375--390}
}
\bib{lyz10jdg}{article}{
	title={Orlicz centroid bodies},author={E. Lutwak},
	author={D. Yang},author={G. Zhang},
	journal={J. Differential Geom.},
	volume={84},
	date={2010},
	pages={365--387}
}
\bib{McMullen93}{article}{
	title={Valuations and dissections},
	author={P. McMullen},
	book={
	title={Handbook of Convex Geometry, Vol. B}, editor={P.M. Gruber}, editor={J.M. Wills},
	publisher={North-Holland},
	date={1993}, address={Amsterdam}
	},
	pages={933--990}
}
\bib{M-S}{article}{
	title={Valuations on convex bodies},
	author={P. McMullen},author={R. Schneider},
	book={
	title={Convexity and its Applications},
	editor={P.M. Gruber}, editor={J.M. Wills}, publisher={Birkh\"auser}, date={1983}
	},
	pages={170--247}
}

\bib{paouris}{article}{
	title={Concentration of mass on isotropic convex bodies},
	author={G. Paouris},
	journal={Geom. Funct. Anal.},
	volume={16},
	date={2006},
	pages={1021--1049}
}

\bib{petty61}{article}{
	title={Centroid surfaces},
	author={C.M. Petty},
	journal={Pac. J. Math.},
	volume={11},
	date={1961},
	pages={1535--1547}
}

\bib{schneider74}{article}{
	title={Equivariant endomorphisms of the space of convex bodies},
	author={R. Schneider},
	journal={Trans. Amer. Math. Soc.},
	volume={194},
	date={1974},
	pages={53--78}
}

\bib{schneider_book}{book}{
	title={Convex Bodies: The Brunn-Minkowski Theory},
	author={R. Schneider},
	publisher={Cambridge University Press},
	address={Cambridge},
	date={1993}
}

\bib{schnschu}{article}{
	title={Rotation equivariant Minkowski valuations},
	author={R. Schneider}, author={F.E. Schuster},
	journal={Int. Math. Res. Not.},
	number={72894},
	date={2006},
	pages={20 pages}
}

\bib{Schu06a}{article}{
	title={Convolutions and multiplier transformations},
	author={F.E. Schuster},
	journal={Trans. Amer. Math. Soc.},
	volume={359},
	date={2007},
	pages={5567--5591}
}

\bib{Schu09}{article}{
	title={Crofton measures and Minkowski valuations},
	author={F.E. Schuster},
	journal={Duke Math. J.},
	volume={154},
	date={2010},
	pages={1-30}
}

\bib{SchuWann10}{article}{
	title={$GL(n)$ contravariant Minkowski valuations},
	author={F.E. Schuster},author={T. Wannerer},
	journal={Trans. Amer. Math. Soc.},
	status={in press}
}
\bib{yaskin}{article}{
	title={Centroid bodies and comparison of volumes},
	author={V. Yaskin },author={M. Yaskina},
	journal={Indiana Univ. Math. J.},
	volume={55},
	date={2006},
	pages={1175--1194}
}
\end{biblist}
\end{bibdiv}

\end{document}